\theoremstyle{plain}                                                                                         %
\newtheorem{thm}{Theorem}[section]                                                                         %
\newtheorem{prop}[thm]{Proposition}                                                                        %
\newtheorem{lem}[thm]{Lemma}                                                                             %
\newtheorem{cor}[thm]{Corollary}                                                                           %
\theoremstyle{definition}                                                                                    %
\newtheorem{dfn}[thm]{Definition}                                                                         %
\newtheorem{exam}[thm]{Example}                                                                         %
\theoremstyle{remark}                                                                                        %
\newtheorem{rmk}[thm]{Remark}
\pdfstringdef{\Title}{The Bartle--Dunford--Schwartz and the Dinculeanu--Singer theorems revisited}                                 %
\pdfstringdef{\Author}{Fernando Mu\~{n}oz, Eve Oja, and C\'{a}ndido Pi\~{n}eiro}                                              %
\begin{document}

\baselineskip=17pt

\title[Classical representation theorems revisited]{The Bartle--Dunford--Schwartz and the Dinculeanu--Singer theorems revisited}

\author{Fernando Mu\~{n}oz, Eve Oja, and C\'{a}ndido Pi\~{n}eiro}

\subjclass[2010]{ Primary: 47A67. Secondary: 28B05, 46B25, 46B28, 46G10, 47B38.}

\keywords{Banach spaces, operators on function spaces, integral representation, operator-valued measure, $q$-semivariation.}

\address{Departamento de Matem\'aticas, Facultad de Ciencias Experimentales, Universidad de Huelva, Campus Universitario de El Carmen, 21071 Huelva, Spain}

\email{fmjimenez@dmat.uhu.es}

\address{Institute of Mathematics and Statistics, University of Tartu, J. Liivi 2, 50409 Tartu, Estonia; Estonian Academy of Sciences, Kohtu 6, 10130 Tallinn, Estonia}

\email{eve.oja@ut.ee}

\address{Departamento de Matem\'aticas, Facultad de Ciencias Experimentales, Universidad de Huelva, Campus Universitario de El Carmen, 21071 Huelva, Spain}

\email{candido@uhu.es}

\begin{abstract}
  Let $X$ and $Y$ be Banach spaces and let $\Omega$ be a compact Hausdorff space. Denote by $\mathcal{C}_{p}(\Omega,X)$ the space of $p$-continous $X$-valued functions, $1\leq p\leq \infty$. For operators $S\in\mathcal{L}(\mathcal{C}(\Omega),\mathcal{L}(X,Y))$ and $U\in\mathcal{L}(\mathcal{C}_{p}(\Omega,X),Y)$, we establish integral representation theorems with respect to a vector measure $m:\Sigma\rightarrow \mathcal{L}(X,Y^{**})$, where $\Sigma$ denotes the $\sigma$-algebra of Borel subsets of $\Omega$. The first theorem extends the classical Bartle--Dunford--Schwartz representation theorem. It is used to prove the second theorem, which extends the classical Dinculeanu--Singer representation theorem, also providing to it an alternative simpler proof. For the latter (and the main) result, we build the needed integration theory, relying on a new concept of the $q$-semivariation, $1\leq q\leq \infty$, of a vector measure $m:\Sigma\rightarrow \mathcal{L}(X,Y^{**})$.
\end{abstract}

\maketitle

\section{Introduction}\label{s1}

Let $X$ be a Banach space and let $\Omega$ be a compact Hausdorff space. The space of continuous functions from $\Omega$ into $X$ ($\mathds{K}$, respectively) is denoted by $\mathcal{C}(\Omega,X)$ ($\mathcal{C}(\Omega)$, respectively). We denote by $\Sigma$ the $\sigma$-algebra of Borel subsets of $\Omega$. The space of $\Sigma$-simple functions with values in $X$ and the Banach space of bounded $\Sigma$-measurable functions with values in $X$ (i.e., the space of functions from $\Omega$ into $X$ which are the uniform limit of a sequence of $\Sigma$-simple functions) are denoted by $\mathcal{S}(\Sigma,X)$ and $\mathcal{B}(\Sigma,X)$, respectively. In the case $X=\mathds{K}$, we abbreviate them to $\mathcal{S}(\Sigma)$ and $\mathcal{B}(\Sigma)$, respectively. It is well known that $\mathcal{C}(\Omega) \subset \mathcal{B}(\Sigma) \subset \mathcal{C}(\Omega)^{**}$ and, more generally, $\mathcal{C}(\Omega,X) \subset \mathcal{B}(\Sigma,X) \subset \mathcal{C}(\Omega,X)^{**}$ as closed subspaces.

Let $Y$ be a Banach space and denote by $\mathcal{L}(X,Y)$ the Banach space of bounded linear operators from $X$ into $Y$. Let $m:\Sigma\rightarrow Y$ be a vector measure of bounded semivariation. It is well known (see, e.g., \cite[pp. 6, 56, 153]{DU}) that the (elementary Bartle) integral $\int_{\Omega}(\cdot)\,dm$ is defined on $\mathcal{B}(\Sigma)$. (The definition passes from characteristic functions to functions in $\mathcal{S}(\Sigma)$ by linearity and to functions in $\mathcal{B}(\Sigma)$ by density.) By the Bartle--Dunford--Schwartz representation theorem, for every operator $S\in\mathcal{L}(\mathcal{C}(\Omega), Y)$ there exists a unique vector measure $m:\Sigma\rightarrow Y^{**}$ of bounded semivariation such that $S\varphi = \int_{\Omega}\varphi\,dm$ for all $\varphi\in\mathcal{C}(\Omega)$. The vector measure $m$ is called the \emph{representing measure} of $S$.

In \cite[Section 4]{MOP2}, this representation was extended from $Y\cong\mathcal{L}(\mathds{K},Y)$ to $\mathcal{L}(X,Y)$. Namely, in the case when $S\in\mathcal{L}(\mathcal{C}(\Omega),\mathcal{L}(X,Y))$, a vector measure $m:\Sigma\rightarrow \mathcal{L}(X,Y^{**})$ of bounded semivariation was built so that $S\varphi = \int_{\Omega}\varphi\,dm$ for all $\varphi\in\mathcal{C}(\Omega)$ (the construction of $m$ is recalled in Remark \ref{r:constructionofm} below). We define a \emph{representing measure} of $S\in\mathcal{L}(\mathcal{C}(\Omega),\mathcal{L}(X,Y))$ as a vector measure $m:\Sigma\rightarrow \mathcal{L}(X,Y^{**})$ of bounded semivariation which satisfies
\[
S\varphi = \int_{\Omega}\varphi\,dm\ \mbox{ for all } \varphi\in\mathcal{C}(\Omega).
\]

In Section \ref{s2}, we extend the Bartle--Dunford--Schwartz theorem, in all its aspects, to this general setting (see Theorem \ref{t:BDSt-S}). We also find a formula connecting the measure $m$ and the \emph{classical} representing measure $\mu:\Sigma\rightarrow \mathcal{L}(X,Y)^{**}$ of $S$ (as given by the Bartle--Dunford--Schwartz theorem) (see Corollary \ref{c:formula-m-mu}).

Results of Section \ref{s2} are applied in Section \ref{s4} to revisit the classical Dinculeanu--Singer representation theorem. By this theorem, for every operator $U\in\mathcal{L}(\mathcal{C}(\Omega, X), Y)$, there exists a unique vector measure $m:\Sigma\rightarrow \mathcal{L}(X,Y^{**})$ such that
\[
Uf = \int_{\Omega}f\,dm\ \mbox{ for all } f\in\mathcal{C}(\Omega,X),
\]
where the existence of the above integral requires from the measure $m$ that its $1$-semivariation (in our terminology; see Section \ref{s3} and, in particular, Example \ref{ex:q-semivar} showing that the $1$-semivariation coincides with the Gowurin--Dinculeanu semivariation) is bounded. 

In Section \ref{s4}, see Theorem \ref{t:Dinc-Sing-general}, which is the main result of this paper, we extend the Dinculeanu--Singer theorem, in all its aspects, from $\mathcal{C}(\Omega, X)$ to the Banach space $\mathcal{C}_{p}(\Omega, X)$ of $p$-continuous $X$-valued functions, where $1\leq p\leq \infty$ (studied in \cite{MOP1} and \cite{MOP2}; see Section \ref{s3} for the definition and needed properties), the spaces $\mathcal{C}_{p}(\Omega,X)$ being contained in $\mathcal{C}(\Omega, X)= \mathcal{C}_{\infty}(\Omega, X)$. However, this is not a routine extension: we do not follow the traditional proofs of the Dinculeanu--Singer theorem (see Remark \ref{r:proofDinculeanu--Singerth}), but we provide a handy alternative to them.

The scheme of our proof is very simple: for $U\in\mathcal{L}(\mathcal{C}_{p}(\Omega, X), Y)$, we consider the \emph{associated operator} $U^{\#}\in\mathcal{L}(\mathcal{C}(\Omega),\mathcal{L}(X,Y))$, defined by
\[
(U^{\#}\varphi)x = U(\varphi x), \ \varphi\in\mathcal{C}(\Omega), \ x\in X.
\]
By the above, we already have the representing measure $m:\Sigma\rightarrow \mathcal{L}(X,Y^{**})$ of $U^{\#}$. And we show (see Theorem \ref{t:mUassocismU}) that $Uf = \int_{\Omega} f\,dm$ for all $f\in\mathcal{C}_{p}(\Omega,X)$, meaning that our $m$ is also a representing measure of $U$.

On the other hand, one easily shows (see Proposition \ref{p:mUismUassoc}) that a representing measure $m:\Sigma\rightarrow \mathcal{L}(X,Y^{**})$ of $U\in\mathcal{L}(\mathcal{C}_{p}(\Omega, X), Y)$ is also a representing measure of $U^{\#}$. Therefore, since the representing measure of $U^{\#}$ is unique, also the representing measure of $U$ is unique. Hence, in the classical case when $U\in\mathcal{L}(\mathcal{C}(\Omega, X), Y)$, we regain the classical representing measure from the Dinculeanu--Singer theorem. Moreover, for the first time in the literature, a general formula, connecting the representing measure $m$ of $U$ and the classical representing measure $\mu:\Sigma\rightarrow\mathcal{L}(X,Y)^{**}$ of $U^{\#}$, is given (see Corollary \ref{c:relation-m-m-mu} and Remark \ref{r:relation-m-mu}).

In Section \ref{s3}, since the integration on $\mathcal{C}_{p}(\Omega,X)$ requires from the measure more than just the boundedness of its semivariation (but less than the integration on $\mathcal{C}(\Omega,X)$), we build the needed theory. For this end, we introduce the concept of the $q$\emph{-semivariation} of a vector measure $m:\Sigma\rightarrow \mathcal{L}(X,Y^{**})$ of bounded semivariation. This enables us to define an integral on $\mathcal{C}_{p}(\Omega,X)$ with values in $Y^{**}$, provided that the $p'$-semivariation of $m$ is bounded.

Finally, Section \ref{s5} is devoted to prove some qualitative complements to Theorem \ref{t:Dinc-Sing-general}, our extension of the Dinculeanu--Singer theorem, it uses results from the paper \cite{MOP2} by the authors and can be read just after Proposition \ref{p:mUismUassoc}.

Our notation is standard. Let $1\leq p \leq \infty$, and denote by $p'$ the conjugate index of $p$ (i.e., $1/p + 1/p' = 1$ with the convention $1/\infty = 0$). We consider Banach spaces over the same, either real or complex, field $\mathds{K}$. A Banach space $X$ will be regarded as a subspace of its bidual $X^{**}$ under the canonical isometric embedding $j_{X}:X\rightarrow X^{**}$. The closed unit ball of $X$ is denoted by $B_{X}$. The Banach space of all \emph{absolutely $p$-summable sequences} in $X$ is denoted by $\ell_{p}(X)$ and its norm by $\|\cdot\|_{p}$. The Banach operator ideal of absolutely $p$-summing operators is denoted by $\mathcal{P}_{p} = (\mathcal{P}_{p}, \|\cdot\|_{\mathcal{P}_{p}})$, and $\mathcal{L} = (\mathcal{L}, \|\cdot\|)$ is, as usual, the Banach operator ideal of bounded linear operators. We denote the characteristic function of $E\in\Sigma$ by $\chi_{E}$.

Our main reference to the vector measure theory is the book \cite{DU} by Diestel and Uhl. In particular, a \emph{vector measure} $m:\Sigma\rightarrow X$ is a finitely additive $X$-valued set function. The \emph{semivariation of} $m$ on $\Omega$ is denoted by $\|m\|(\Omega)$ and defined as
\[
\|m\|(\Omega)=\sup\Big\|\sum_{E_{i}\in\Pi} \varepsilon_{i}m(E_{i})\Big\|,
\]
where the supremum is taken over all finite partitions $\Pi=(E_{i})_{i=1}^{n}$ of $\Omega$ and all finite systems $(\varepsilon_{i})_{i=1}^{n}$ with $|\varepsilon_{i}|\leq1$, $1\leq i\leq n$, $n\in\mathds{N}$ (see, e.g., \cite[p. 4, Proposition 11]{DU}). If $\|m\|(\Omega)<\infty$, then $m$ is called a \emph{measure of bounded semivariation}. A vector measure $m:\Sigma\rightarrow X$ is \emph{bounded} if its range is bounded in $X$. This happens if and only if $m$ is of bounded semivariation (see, e.g., \cite[p. 4, Proposition 11]{DU}). Therefore, a vector measure of bounded semivariation is often called a \emph{bounded vector measure} (see, e.g., \cite[p. 5]{DU}), and we shall mainly use this term below.

\section{\texorpdfstring{Representing measure of $S\in\mathcal{L}(\mathcal{C}(\Omega),\mathcal{L}(X,Y))$}{Representing measure of S in L(C(K),L(X,Y))}}\label{s2}

Let $X$ and $Y$ be Banach spaces and let $\Omega$ be a compact Hausdorff space.

\begin{dfn}
Let $S\in\mathcal{L}(\mathcal{C}(\Omega),\mathcal{L}(X,Y))$. A \emph{representing measure} of $S$ is a bounded vector measure $m:\Sigma\rightarrow\mathcal{L}(X,Y^{**})$ which satisfies
\[
S\varphi = \int_{\Omega} \varphi\,dm  \ \ \ \mbox{ for all } \varphi\in\mathcal{C}(\Omega).
\]
\end{dfn}

As was mentioned in the Introduction, a representing measure $m:\Sigma\rightarrow \mathcal{L}(X,Y^{**})$ exists for every operator $S\in\mathcal{L}(\mathcal{C}(\Omega),\mathcal{L}(X,Y))$. Since we are going to use such a measure, it would be good (but not crucial) to know that it is unique. We start by a general observation that will also be used in Section \ref{s3}.

Let $m:\Sigma\rightarrow\mathcal{L}(X,Y^{**})$ be a bounded vector measure. Then, for every $x\in X$,
\[
m_{x} := m(\cdot)x :\Sigma\rightarrow Y^{**}
\]
is clearly a bounded vector measure. If $y^{*}\in Y^{*}$, then $x\otimes y^{*}\in\mathcal{L}(X,Y^{**})^{*}$ and for all $\varphi\in\mathcal{B}(\Sigma)$,
\begin{equation} \label{eq:intmu}
\langle \int_{\Omega}\varphi\,dm, x\otimes y^{*}\rangle = \int_{\Omega}\varphi\,d\mu_{x,y^{*}} = \langle y^{*}, \int_{\Omega}\varphi\,dm_{x}\rangle,
\end{equation}
where $\mu_{x,y^{*}}:=(x\otimes y^{*})m$, because
\[
\mu_{x,y^{*}}(E) = ((x\otimes y^{*})m)(E) = \langle m(E),x\otimes y^{*}\rangle  = \langle y^{*},m(E)x\rangle = \langle y^{*},m_{x}(E)\rangle,\ \ E\in\Sigma.
\]

Let $S\in\mathcal{L}(\mathcal{C}(\Omega),\mathcal{L}(X,Y))$. For every $x\in X$, define $S_{x}\in\mathcal{L}(\mathcal{C}(\Omega),Y)$ by
\[
S_{x}\varphi = (S\varphi)x,\ \ \varphi\in\mathcal{C}(\Omega).
\]

\begin{lem} \label{l:mxSx}
Let $X$ and $Y$ be Banach spaces and let $\Omega$ be a compact Hausdorff space. Let $m:\Sigma\rightarrow \mathcal{L}(X,Y^{**})$ be a representing measure of an operator $S\in\mathcal{L}(\mathcal{C}(\Omega), \mathcal{L}(X,Y))$. Then $m_{x}:\Sigma\rightarrow Y^{**}$ is the (classical) representing measure of the operator $S_{x}\in\mathcal{L}(\mathcal{C}(\Omega),Y)$ (given by the Bartle--Dunford--Schwartz theorem) and $\mu_{x,y^{*}} = S_{x}^{*}y^{*}$ for all $x\in X$ and $y^{*}\in Y^{*}$.
\end{lem}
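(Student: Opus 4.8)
The plan is to extract both assertions directly from the three-way identity \eqref{eq:intmu}, which already packages everything needed; no density or approximation argument is required, since \eqref{eq:intmu} is valid for every $\varphi\in\mathcal{B}(\Sigma)$, and in particular for $\varphi\in\mathcal{C}(\Omega)$. Fix $x\in X$ and $y^{*}\in Y^{*}$. The only genuine care lies in the bookkeeping of the canonical identifications: I regard $S\varphi\in\mathcal{L}(X,Y)$ as an element of $\mathcal{L}(X,Y^{**})$ through $j_{Y}$, and I use that $x\otimes y^{*}\in\mathcal{L}(X,Y^{**})^{*}$ acts by $\langle T, x\otimes y^{*}\rangle = \langle y^{*}, Tx\rangle$. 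Substituting the hypothesis $\int_{\Omega}\varphi\,dm = S\varphi$ into the left-hand member of \eqref{eq:intmu}, for every $\varphi\in\mathcal{C}(\Omega)$ we obtain
\[
\langle S\varphi, x\otimes y^{*}\rangle = \langle y^{*}, (S\varphi)x\rangle = \langle y^{*}, S_{x}\varphi\rangle .
\]

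To prove the first assertion, I would chain this with the right-hand member of \eqref{eq:intmu}, obtaining $\langle y^{*}, S_{x}\varphi\rangle = \langle y^{*}, \int_{\Omega}\varphi\,dm_{x}\rangle$ for all $y^{*}\in Y^{*}$ and all $\varphi\in\mathcal{C}(\Omega)$. Since $Y^{*}$ separates the points of $Y^{**}$, this forces $S_{x}\varphi = \int_{\Omega}\varphi\,dm_{x}$ for every $\varphi\in\mathcal{C}(\Omega)$. As observed just before the lemma, $m_{x} = m(\cdot)x$ is a bounded vector measure; hence $m_{x}$ represents $S_{x}$ in the classical sense, and by the uniqueness in the Bartle--Dunford--Schwartz theorem it is the representing measure of $S_{x}$.

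For the second assertion, I would instead chain the displayed equality with the middle member of \eqref{eq:intmu}, which gives $\langle y^{*}, S_{x}\varphi\rangle = \int_{\Omega}\varphi\,d\mu_{x,y^{*}}$ for every $\varphi\in\mathcal{C}(\Omega)$. By the definition of the adjoint, the left-hand side is $\langle\varphi, S_{x}^{*}y^{*}\rangle$, so the functionals $S_{x}^{*}y^{*}$ and $\mu_{x,y^{*}}$ (the latter viewed in $\mathcal{C}(\Omega)^{*}$ via integration) agree on $\mathcal{C}(\Omega)$; that is, $\mu_{x,y^{*}} = S_{x}^{*}y^{*}$. As $x$ and $y^{*}$ were arbitrary, this is the claimed formula.

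I do not expect a serious obstacle: the entire content is the single identity \eqref{eq:intmu}, and the work is purely in tracking the embeddings $Y\hookrightarrow Y^{**}$ and $\mathcal{L}(X,Y)\hookrightarrow\mathcal{L}(X,Y^{**})$ together with the action of $x\otimes y^{*}$. The one point deserving attention is the final identification in the first assertion: passing from ``$m_{x}$ represents $S_{x}$'' to ``$m_{x}$ is the representing measure'' relies on the uniqueness in the Bartle--Dunford--Schwartz theorem, hence on the regularity built into the notion of representing measure used here.
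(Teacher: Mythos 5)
Your proposal is correct and takes essentially the same route as the paper: both read the two claims directly off identity \eqref{eq:intmu} combined with the hypothesis $S\varphi=\int_{\Omega}\varphi\,dm$, conclude that $m_{x}$ is the Bartle--Dunford--Schwartz representing measure of $S_{x}$, and then obtain $\mu_{x,y^{*}}=S_{x}^{*}y^{*}$. The only cosmetic difference is that the paper quotes the classical fact $S_{x}^{*}y^{*}=\langle y^{*},m_{x}(\cdot)\rangle$ for the second claim, whereas you rederive that duality from the middle member of \eqref{eq:intmu}; the content, including the appeal to uniqueness in the classical theorem, is the same.
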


\begin{proof}
For all $\varphi\in\mathcal{C}(\Omega)$, $x\in X$ and $y^{*}\in Y^{*}$, by \eqref{eq:intmu},
\[
\langle S_{x}\varphi,y^{*}\rangle = \langle (S\varphi)x,y^{*}\rangle = \langle S\varphi, x\otimes y^{*}\rangle = \langle y^{*}, \int_{\Omega}\varphi\,dm_{x}\rangle.
\]
This shows that $m_{x}$ is the representing measure of the operator $S_{x}\in\mathcal{L}(\mathcal{C}(\Omega),Y)$. Hence $S_{x}^{*}\in\mathcal{L}(Y^{*},\mathcal{C}(\Omega)^{*})$ and, by the Bartle--Dunford--Schwartz theorem, it is well known that $S_{x}^{*}y^{*} = \langle y^{*},m_{x}(\cdot)\rangle = \mu_{x,y^{*}}$.
\end{proof}

\begin{prop} \label{p:mUassocuniq}
Let $X$ and $Y$ be Banach spaces and let $\Omega$ be a compact Hausdorff space. Then the representing measure $m:\Sigma\rightarrow \mathcal{L}(X,Y^{**})$ of an operator $S\in\mathcal{L}(\mathcal{C}(\Omega), \mathcal{L}(X,Y))$ is unique.
\end{prop}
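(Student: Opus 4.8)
The plan is to reduce the uniqueness of $m$ to the uniqueness of the classical representing measures $m_x$, which is already supplied by the Bartle--Dunford--Schwartz theorem via Lemma \ref{l:mxSx}. Concretely, suppose $m$ and $\widetilde{m}$ are both representing measures of the same operator $S\in\mathcal{L}(\mathcal{C}(\Omega),\mathcal{L}(X,Y))$. For each fixed $x\in X$, both $m_x = m(\cdot)x$ and $\widetilde{m}_x = \widetilde{m}(\cdot)x$ are, by Lemma \ref{l:mxSx}, the representing measure of one and the same operator $S_x\in\mathcal{L}(\mathcal{C}(\Omega),Y)$. Since the classical Bartle--Dunford--Schwartz representing measure is unique, we conclude $m_x = \widetilde{m}_x$, that is, $m(E)x = \widetilde{m}(E)x$ for every $E\in\Sigma$.

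The second (and essentially final) step is to promote this pointwise-in-$x$ equality to the operator equality $m = \widetilde{m}$. Since $x\in X$ was arbitrary in the previous step, for each fixed $E\in\Sigma$ we have $m(E)x = \widetilde{m}(E)x$ for all $x\in X$; by the definition of equality of operators in $\mathcal{L}(X,Y^{**})$, this gives $m(E) = \widetilde{m}(E)$. As $E\in\Sigma$ was arbitrary, $m = \widetilde{m}$ as $\mathcal{L}(X,Y^{**})$-valued set functions, which is exactly the desired uniqueness.

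I do not expect any genuine obstacle here: the whole argument is a clean "test against each $x$" reduction, and all the substantive analytic content---that $m_x$ represents $S_x$ and that such a representing measure is unique---has already been isolated in Lemma \ref{l:mxSx} together with the classical theorem. The only point deserving a line of care is making sure the two quantifiers are handled in the right order, namely first fixing $x$ to invoke classical uniqueness (which acts on a single scalar-parametrized operator $S_x$), and only afterwards fixing $E$ and letting $x$ range to recover the equality of the operators $m(E)$ and $\widetilde{m}(E)$. An alternative but equivalent route would test further against $y^*\in Y^*$ using the scalar measures $\mu_{x,y^*} = S_x^* y^*$ from Lemma \ref{l:mxSx}, but this adds a redundant layer since operator equality in $\mathcal{L}(X,Y^{**})$ is already determined by the action on all $x\in X$.
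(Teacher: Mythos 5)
Your proposal is correct and takes essentially the same approach as the paper: both proofs fix $x\in X$ and use Lemma \ref{l:mxSx} to reduce uniqueness to the classical Bartle--Dunford--Schwartz setting for the operators $S_{x}$. The only cosmetic difference is that the paper descends to the scalar measures via the identity $\mu_{x,y^{*}} = S_{x}^{*}y^{*}$ (testing against $x$ and $y^{*}$ simultaneously), whereas you stop at the vector measures $m_{x}$ and cite the uniqueness clause of the classical theorem directly --- an equivalent packaging of the same argument.
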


\begin{proof}
Let $m_{1},m_{2}:\Sigma\rightarrow \mathcal{L}(X,Y^{**})$ be two representing measures of an operator $S\in\mathcal{L}(\mathcal{C}(\Omega), \mathcal{L}(X,Y))$, and let $\mu_{x,y^{*}}^{i} = (x\otimes y^{*})m_{i}$, $i=1, 2$. We know from Lemma \ref{l:mxSx} that $\mu_{x,y^{*}}^{1} = S_{x}^{*}y^{*} = \mu_{x,y^{*}}^{2}$, giving that $\langle y^{*},m_{1}(E)x\rangle = \langle y^{*},m_{2}(E)x\rangle,$ for all $E\in\Sigma$, $x\in X$, and $y^{*}\in Y^{*}$. This means that $m_{1}=m_{2}$.
\end{proof}

Recalling that $\mathcal{L}(\mathds{K},Y)\cong Y$, the following result extends the classical Bartle--Dunford--Schwartz theorem (see, e.g., \cite[p. 152]{DU}) in all its aspects.

\begin{thm} \label{t:BDSt-S}
Let $X$ and $Y$ be Banach spaces and let $\Omega$ be a compact Hausdorff space.

$($\emph{a}$)$ Every operator $S\in\mathcal{L}(\mathcal{C}(\Omega),\mathcal{L}(X,Y))$ has a unique representing measure $m:\Sigma\rightarrow\mathcal{L}(X,Y^{**})$.

$($\emph{b}$)$ Assume that $m:\Sigma\rightarrow\mathcal{L}(X,Y^{**})$ is a bounded vector measure. Then, there exists an operator $S\in\mathcal{L}(\mathcal{C}(\Omega),\mathcal{L}(X,Y))$ such that  $m$ is its representing measure if and only if for all $x\in X$,
\[
\mu_{x,y^{*}} = \langle y^{*},m_{x}(\cdot)\rangle \in\mathcal{C}(\Omega)^{*},\ \ y^{*}\in Y^{*},
\]
and the map $Y^{*}\rightarrow \mathcal{C}(\Omega)^{*}$, $y^{*}\mapsto \langle y^{*},m_{x}(\cdot)\rangle$, is linear, bounded, and weak*-to-weak* continuous.

In this case, $m_{x}:\Sigma\rightarrow Y^{**}$ is the representing measure of the operator $S_{x}\in\mathcal{L}(\mathcal{C}(\Omega),Y)$ and $\mu_{x,y^{*}} = S_{x}^{*}y^{*}$ for all $x\in X$ and $y^{*}\in Y^{*}$, the equality $\|S\| = \|m\|(\Omega)$ holds, and the measure $m:\Sigma\rightarrow \mathcal{L}(X,Y^{**}) = (X\hat{\otimes}_{\pi}Y^{*})^{*}$ is weak*-countably additive.
\end{thm}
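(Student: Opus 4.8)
The plan is to obtain part (a) by combining the existence of a representing measure, already produced in \cite{MOP2} and recalled in Remark \ref{r:constructionofm}, with the uniqueness established in Proposition \ref{p:mUassocuniq}; the equivalence in (b) and the quantitative assertions will then be proved around these two facts. For the necessity in (b) I would assume that $m$ represents an operator $S$ and invoke Lemma \ref{l:mxSx} directly: it yields that $m_{x}$ is the Bartle--Dunford--Schwartz representing measure of $S_{x}$ and that $\mu_{x,y^{*}} = S_{x}^{*}y^{*}$. Since $S_{x}^{*}\in\mathcal{L}(Y^{*},\mathcal{C}(\Omega)^{*})$ is an adjoint operator, the map $y^{*}\mapsto\mu_{x,y^{*}}$ is at once linear, bounded, $\mathcal{C}(\Omega)^{*}$-valued, and weak*-to-weak* continuous, which is precisely the stated condition.

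For the sufficiency, fix $x\in X$. The hypothesis gives a linear, bounded, weak*-to-weak* continuous map $\Phi_{x}\colon Y^{*}\rightarrow\mathcal{C}(\Omega)^{*}$, $\Phi_{x}y^{*}=\mu_{x,y^{*}}$. The standard fact that a weak*-to-weak* continuous operator between dual spaces is the adjoint of a (unique) operator between the preduals produces $S_{x}\in\mathcal{L}(\mathcal{C}(\Omega),Y)$ with $S_{x}^{*}=\Phi_{x}$. The point I expect to be the main obstacle lives here: it is weak*-to-weak* continuity, not mere boundedness, that guarantees the pre-adjoint takes values in $Y$ itself rather than in $Y^{**}$, and this is exactly what will force the assembled operator to map into $\mathcal{L}(X,Y)$. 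Comparing $S_{x}^{*}y^{*}=\langle y^{*},m_{x}(\cdot)\rangle$ with the Bartle--Dunford--Schwartz description of the adjoint identifies $m_{x}$ as the representing measure of $S_{x}$.

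Next I would set $(S\varphi)x := S_{x}\varphi$. Linearity of $x\mapsto S_{x}$ follows from the uniqueness in the Bartle--Dunford--Schwartz theorem together with the identities $m_{x_{1}+x_{2}}=m_{x_{1}}+m_{x_{2}}$ and $m_{\lambda x}=\lambda m_{x}$, which hold because $m(E)\in\mathcal{L}(X,Y^{**})$. The boundedness, and in fact the sharp norm equality, comes from
\[
\|m\|(\Omega)=\sup_{\|x\|\leq 1}\|m_{x}\|(\Omega)=\sup_{\|x\|\leq 1}\|S_{x}\|=\|S\|.
\]
Here the first equality follows by interchanging suprema in the defining formula for the semivariation, using $\|\sum_{i}\varepsilon_{i}m(E_{i})\|=\sup_{\|x\|\leq 1}\|\sum_{i}\varepsilon_{i}m_{x}(E_{i})\|$ for each partition; the second is $\|S_{x}\|=\|m_{x}\|(\Omega)$ from the Bartle--Dunford--Schwartz theorem; and the third is $\|S\|=\sup_{\|x\|\leq 1}\|S_{x}\|$. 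This simultaneously places $S$ in $\mathcal{L}(\mathcal{C}(\Omega),\mathcal{L}(X,Y))$ and gives $\|S\|=\|m\|(\Omega)$. That $m$ represents $S$ is then verified by pairing with the functionals $x\otimes y^{*}$: using \eqref{eq:intmu} and $\int_{\Omega}\varphi\,dm_{x}=S_{x}\varphi$ one obtains $\langle S\varphi,x\otimes y^{*}\rangle=\langle\int_{\Omega}\varphi\,dm,x\otimes y^{*}\rangle$, and since these functionals separate the points of $\mathcal{L}(X,Y^{**})$ the two operators coincide.

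Finally, for the weak*-countable additivity I would work in the identification $\mathcal{L}(X,Y^{**})=(X\hat{\otimes}_{\pi}Y^{*})^{*}$, under which the elementary tensors $x\otimes y^{*}$ span a dense subspace of the predual. For each of them $E\mapsto\langle m(E),x\otimes y^{*}\rangle=\mu_{x,y^{*}}(E)$ is the countably additive scalar measure $S_{x}^{*}y^{*}$. The set of $z$ in the predual for which $E\mapsto\langle m(E),z\rangle$ is countably additive is closed, since boundedness of $m$ gives $\sup_{E\in\Sigma}|\langle m(E),z-z_{k}\rangle|\leq\|m\|(\Omega)\,\|z-z_{k}\|$, so that norm convergence of $z_{k}$ produces a uniform limit of countably additive measures; as this set contains the dense span of the elementary tensors, it is the whole predual, which is exactly the weak*-countable additivity of $m$.
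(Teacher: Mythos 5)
Your proposal is correct, and much of it coincides with the paper's own proof: part (a) (existence from \cite{MOP2} plus Proposition \ref{p:mUassocuniq}), the necessity direction of (b) (via Lemma \ref{l:mxSx} and the weak*-to-weak* continuity of adjoints), and the weak*-countable additivity argument are essentially the paper's (for the latter, the paper fixes a disjoint sequence $(E_{n})$ and shows the bounded partial sums $f_{k}=\sum_{n\leq k}m(E_{n})$ converge to $m(\bigcup_{n}E_{n})$ pointwise on the dense span of elementary tensors, while you show that the set of $z$ for which $\langle m(\cdot),z\rangle$ is countably additive is a closed subspace containing that span; both are the same density-plus-boundedness idea). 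Where you genuinely diverge is in the sufficiency direction of (b) and in the norm equality. The paper defines $S$ globally as the restriction to $\mathcal{C}(\Omega)$ of the elementary Bartle integration operator $\varphi\mapsto\int_{\Omega}\varphi\,dm\in\mathcal{L}(X,Y^{**})$, and then proves $(S\varphi)x\in Y$ by exhibiting it as the composition of the weak*-to-weak* continuous map $y^{*}\mapsto\langle y^{*},m_{x}(\cdot)\rangle$ with the weak*-continuous functional $\mu\mapsto\int_{\Omega}\varphi\,d\mu$, so that $(S\varphi)x$ is weak*-continuous on $Y^{*}$. You instead work pointwise in $x$: you take the pre-adjoint $S_{x}$ of $\Phi_{x}$, identify $m_{x}$ as its representing measure, and assemble $S$ via $(S\varphi)x:=S_{x}\varphi$, which forces you to supply the extra steps of linearity in $x$ (via uniqueness in the classical Bartle--Dunford--Schwartz theorem) and boundedness (via the interchange of suprema) that the paper's global definition gets for free. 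The substance of the two uses of weak*-to-weak* continuity is the same, since the pre-adjoint fact is proved by exactly the paper's composition argument. In exchange, your supremum interchange $\|m\|(\Omega)=\sup_{\|x\|\leq1}\|m_{x}\|(\Omega)=\sup_{\|x\|\leq1}\|S_{x}\|=\|S\|$ gives a direct, self-contained proof of the norm equality, whereas the paper outsources it to \cite[Proposition 4.1]{MOP2} or derives it via Theorem \ref{t:hatS} and Corollary \ref{c:normSandm}; your argument is, in effect, the computation the paper records only later in Example \ref{ex:q-semivar}.
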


\begin{proof}
(a) The existence of a representing measure was proved in \cite[Section 4]{MOP2}. The (simple) construction of this measure is recalled in Remark \ref{r:constructionofm}. Its uniqueness comes from Proposition \ref{p:mUassocuniq}.

(b) By Lemma \ref{l:mxSx}, we know that $m_{x}:\Sigma\rightarrow Y^{**}$ is the representing measure of $S_{x}\in\mathcal{L}(\mathcal{C}(\Omega),Y)$. Then $S^{*}_{x}\in\mathcal{L}(Y^{*},\mathcal{C}(\Omega)^{*})$ is weak*-to-weak* continuous. Also, by Lemma \ref{l:mxSx}, $S^{*}_{x}y^{*}=\mu_{x,y^{*}} = \langle y^{*},m_{x}(\cdot)\rangle$. This proves the ``only if'' part.

For the ``if'' part, let $S$ be the restriction to $\mathcal{C}(\Omega)$ of the integration operator $\int_{\Omega}\varphi\,dm$, $\varphi\in\mathcal{B}(\Sigma)$. Then $S\in\mathcal{L}(\mathcal{C}(\Omega),\mathcal{L}(X,Y^{**}))$. It remains to show that $(S\varphi)x\in Y$ for all $\varphi\in\mathcal{C}(\Omega)$ and $x\in X$. By \eqref{eq:intmu},
\[
\langle y^{*}, (S\varphi)x\rangle = \langle S\varphi, x\otimes y^{*}\rangle = \langle y^{*}, \int_{\Omega}\varphi\,dm_{x} \rangle,\ \ y^{*}\in Y^{*}.
\]
Hence, $(S\varphi)x\in Y^{**}$ is the composition of the weak*-to-weak* continuous map $y^{*}\mapsto \langle y^{*},m_{x}(\cdot)\rangle$ from $Y^{*}$ to $\mathcal{C}(\Omega)^{*}$ and the weak* continuous functional $\mu\mapsto \int_{\Omega}\varphi\,d\mu$ on $\mathcal{C}(\Omega)^{*}$. Therefore, $(S\varphi)x$ is a weak* continuous functional on $Y^{*}$, and $(S\varphi)x\in Y$ as desired.

For the ``in this case'' part, the first two claims come from Lemma \ref{l:mxSx}. The third claim was proved in \cite[Proposition 4.1]{MOP2}; for an alternative proof, see Corollary \ref{c:normSandm} below.

Finally, to show the weak*-countable additivity of $m$, let $(E_{n})$ be a sequence of pairwise disjoint members of $\Sigma$. Denote $f_{k} := \sum_{n=1}^{k}m(E_{n})$, $k\in\mathds{N}$, and $f:=m(\bigcup_{n=1}^{\infty}E_{n})$. By the countable additivity of $\mu_{x,y^{*}}$, we have
\[
\langle x\otimes y^{*}, f\rangle = \mu_{x,y^{*}}(\bigcup_{n=1}^{\infty}E_{n}) = \sum_{n=1}^{\infty} \mu_{x,y^{*}}(E_{n}) = \lim_{k \rightarrow \infty} \langle x\otimes y^{*}, f_{k}\rangle
\]
for all $x\in X$ and $y^{*}\in Y^{*}$. Since also the sequence $(f_{k})$ is bounded (in fact, $\|f_{k}\|\leq\|m\|(\Omega)$), $f_{k}\rightarrow f$ pointwise on $X\hat{\otimes}_{\pi}Y^{*}$. This means that
\[
\langle u, m(\bigcup_{n=1}^{\infty}E_{n}) \rangle = \sum_{n=1}^{\infty}\langle u, m(E_{n}) \rangle
\]
for all $u\in X\hat{\otimes}_{\pi}Y^{*}$, as desired.
\end{proof}

In the classical case when $S\in\mathcal{L}(\mathcal{C}(\Omega), Y)$ and $m:\Sigma\rightarrow Y^{**}$ is its representing measure, the integration operator $\hat{S}\in\mathcal{L}(\mathcal{B}(\Sigma), Y^{**})$, $\hat{S}\varphi = \int_{\Omega}\varphi\,dm$, $\varphi\in\mathcal{B}(\Sigma)$, extends the operator $S$ from $\mathcal{C}(\Omega)$ to $\mathcal{B}(\Sigma)$, where $\mathcal{C}(\Omega)$ sits as a closed subspace. And, in turn, $S^{**}\in\mathcal{L}(\mathcal{C}(\Omega)^{**}, Y^{**})$ extends the operator $\hat{S}$ from $\mathcal{B}(\Sigma)$ to $\mathcal{C}(\Omega)^{**}$, where $\mathcal{B}(\Sigma)$ sits as a closed subspace.

In \cite[the proof of Proposition 4.1]{MOP2}, we pointed out that a similar phenomenon occurs also in the general case when $S\in\mathcal{L}(\mathcal{C}(\Omega),\mathcal{L}(X,Y))$. In the following, we shall make this precise.

Let $S\in\mathcal{L}(\mathcal{C}(\Omega),\mathcal{L}(X,Y))$ and let $m:\Sigma\rightarrow\mathcal{L}(X,Y^{**})$ be its representing measure. Then, as above, the integration operator $\hat{S}\in\mathcal{L}(\mathcal{B}(\Sigma), \mathcal{L}(X,Y^{**}))$ extends the operator $S$. More precisely, let
\[
J:\mathcal{L}(X, Y)\rightarrow \mathcal{L}(X, Y^{**}),\ J(A) = j_{Y}A,\ A\in\mathcal{L}(X, Y),
\]
be the natural isometric embedding. Then
\[
JS=\hat{S}|_{\mathcal{C}(\Omega)}.
\]
To understand in which sense $S^{**}$ ``extends'' $\hat{S}$, recall that $\mathcal{L}(X,Y^{**}) = (X\hat{\otimes}_{\pi}Y^{*})^{*}$ as Banach spaces ($\pi$ denotes the projective tensor norm, as usual), and put
\[
P := (j_{X\hat{\otimes}_{\pi}Y^{*}})^{*}.
\]
Then $P$ is the (natural) projection from $\mathcal{L}(X,Y^{**})^{**} = (X\hat{\otimes}_{\pi}Y^{*})^{***}$ onto $\mathcal{L}(X,Y^{**}) = (X\hat{\otimes}_{\pi}Y^{*})^{*}$.

\begin{thm} \label{t:hatS}
Let $X$ and $Y$ be Banach spaces and let $\Omega$ be a compact Hausdorff space. Assume that $S\in\mathcal{L}(\mathcal{C}(\Omega),\mathcal{L}(X,Y))$ and let $m:\Sigma\rightarrow\mathcal{L}(X,Y^{**})$ be its representing measure. Then, with the above notation,
\begin{equation} \label{eq:pjs}
\hat{S} = PJ^{**}S^{**}|_{\mathcal{B}(\Sigma)}.
\end{equation}
\end{thm}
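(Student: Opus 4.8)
The plan is to verify the identity~\eqref{eq:pjs} by testing both sides against the separating family of functionals of the form $\varphi\mapsto\langle(\cdot)x,y^{*}\rangle$, i.e.\ by pairing with the elementary tensors $x\otimes y^{*}\in X\hat{\otimes}_{\pi}Y^{*}$, and checking equality on the total set of characteristic functions $\chi_{E}$, $E\in\Sigma$, which span a dense subspace of $\mathcal{B}(\Sigma)$. Since both $\hat{S}$ and $PJ^{**}S^{**}|_{\mathcal{B}(\Sigma)}$ are bounded linear operators into $\mathcal{L}(X,Y^{**}) = (X\hat{\otimes}_{\pi}Y^{*})^{*}$, it suffices to show that for every $E\in\Sigma$, $x\in X$, and $y^{*}\in Y^{*}$,
\[
\langle \hat{S}\chi_{E}, x\otimes y^{*}\rangle = \langle PJ^{**}S^{**}\chi_{E}, x\otimes y^{*}\rangle.
\]
The left-hand side is immediate from the construction of the elementary integral together with~\eqref{eq:intmu}: $\langle\hat{S}\chi_{E},x\otimes y^{*}\rangle = \langle m(E),x\otimes y^{*}\rangle = \langle y^{*},m(E)x\rangle = \mu_{x,y^{*}}(E)$.

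For the right-hand side I would unwind the adjoints from the outside in. First, $P=(j_{X\hat{\otimes}_{\pi}Y^{*}})^{*}$, so $\langle PF, u\rangle = \langle F, j_{X\hat{\otimes}_{\pi}Y^{*}}u\rangle$ for $F\in\mathcal{L}(X,Y^{**})^{**}$ and $u\in X\hat{\otimes}_{\pi}Y^{*}$; thus pairing with $u=x\otimes y^{*}$ replaces $P$ by evaluation at the canonical image of $x\otimes y^{*}$ in $(X\hat{\otimes}_{\pi}Y^{*})^{***}$. Next, $\langle J^{**}G, \xi\rangle = \langle G, J^{*}\xi\rangle$ for $G\in\mathcal{L}(X,Y)^{**}$; here $J:\mathcal{L}(X,Y)\to\mathcal{L}(X,Y^{**})$ is $A\mapsto j_{Y}A$, so its adjoint on the functional $x\otimes y^{*}$ returns again the functional $A\mapsto\langle y^{*},Ax\rangle$, i.e.\ $J^{*}(x\otimes y^{*}) = x\otimes y^{*}$ viewed in $\mathcal{L}(X,Y)^{*}$ (the embedding $J$ is precisely designed so that this transposition is transparent, since $\langle j_{Y}Ax, y^{*}\rangle = \langle y^{*}, Ax\rangle$). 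Finally, $\langle S^{**}\chi_{E}, \eta\rangle = \langle \chi_{E}, S^{*}\eta\rangle$ for $\eta\in\mathcal{L}(X,Y)^{*}$, where $\chi_{E}$ is regarded as an element of $\mathcal{C}(\Omega)^{**}$ and $S^{*}\eta\in\mathcal{C}(\Omega)^{*}$. Tracking the functional $\eta=x\otimes y^{*}$ through $S^{*}$, I invoke Lemma~\ref{l:mxSx}: since $\langle S\varphi, x\otimes y^{*}\rangle = \langle S_{x}\varphi, y^{*}\rangle$, the functional $S^{*}(x\otimes y^{*})$ equals $S_{x}^{*}y^{*} = \mu_{x,y^{*}}$ as an element of $\mathcal{C}(\Omega)^{*}$.

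Putting these together, the right-hand side becomes $\langle \chi_{E}, \mu_{x,y^{*}}\rangle$, the duality between $\chi_{E}\in\mathcal{B}(\Sigma)\subset\mathcal{C}(\Omega)^{**}$ and $\mu_{x,y^{*}}\in\mathcal{C}(\Omega)^{*}$, which by the definition of the elementary integral (applied to the scalar measure $\mu_{x,y^{*}}$) equals $\int_{\Omega}\chi_{E}\,d\mu_{x,y^{*}} = \mu_{x,y^{*}}(E)$. This matches the left-hand side, so the two operators agree on all $\chi_{E}$, hence on $\mathcal{S}(\Sigma)$ by linearity and on $\mathcal{B}(\Sigma)$ by density and boundedness.

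The main obstacle is bookkeeping rather than depth: I must be scrupulous about where each object lives and which canonical embedding is being applied at each stage, in particular the identification $\mathcal{B}(\Sigma)\subset\mathcal{C}(\Omega)^{**}$ that lets $S^{**}$ act on $\chi_{E}$, and the pairing $\langle\chi_{E},\mu_{x,y^{*}}\rangle=\mu_{x,y^{*}}(E)$ which is exactly the compatibility of the weak*-topology evaluation with the elementary integral. The cleanest way to keep this honest is to note that $J^{*}$ restricted to elementary tensors is the identity on $X\hat{\otimes}_{\pi}Y^{*}$ (under the two realizations of this space as predual of $\mathcal{L}(X,Y^{**})$ and as a norming subspace of $\mathcal{L}(X,Y)^{*}$), so that the whole chain $PJ^{**}S^{**}$, when tested against $x\otimes y^{*}$, collapses to $S^{*}$ evaluated at $x\otimes y^{*}$, reducing everything to Lemma~\ref{l:mxSx} and the classical Bartle--Dunford--Schwartz identity $\langle\chi_{E},S_{x}^{*}y^{*}\rangle = \mu_{x,y^{*}}(E)$.
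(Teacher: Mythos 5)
Your proof is correct and follows essentially the same route as the paper's: test both operators against elementary tensors $x\otimes y^{*}$ on characteristic functions, unwind $P$, $J^{**}$, $S^{**}$ via their adjoints to reduce the right-hand side to $\langle\chi_{E},S^{*}(x\otimes y^{*})\rangle$, identify $S^{*}(x\otimes y^{*})=S_{x}^{*}y^{*}=\mu_{x,y^{*}}$ through Lemma~\ref{l:mxSx}, and conclude by linearity and density. The only cosmetic difference is that you emphasize the collapse of $J^{*}j_{X\hat{\otimes}_{\pi}Y^{*}}$ to the identity on elementary tensors as the organizing principle, which the paper carries out as an inline computation.
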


\begin{proof}
It suffices to verify that
\[
\hat{S}\chi_{E} = PJ^{**}S^{**}\chi_{E}\ \ \mbox{ for all }E\in\Sigma.
\]
Then by linearity, \eqref{eq:pjs} holds on $\mathcal{S}(\Sigma)$, and by density, \eqref{eq:pjs} holds on $\mathcal{B}(\Sigma)$.

For this end, in turn, it suffices to verify that
\begin{equation} \label{eq:pjs-2}
\langle x\otimes y^{*}, \hat{S}\chi_{E}\rangle = \langle x\otimes y^{*},PJ^{**}S^{**}\chi_{E}\rangle, \ \ x\in X, y^{*}\in Y^{*}.
\end{equation}

For the left-hand side of \eqref{eq:pjs-2}, we have
\begin{equation} \label{eq:hatS1}
\langle x\otimes y^{*}, \hat{S}\chi_{E}\rangle = \langle x\otimes y^{*},m(E)\rangle = \langle y^{*}, m(E)x\rangle = \mu_{x,y^{*}}(E).
\end{equation}

For the right-hand side of \eqref{eq:pjs-2}, we have, considering $\mathcal{C}(\Omega)^{*}$ embedded in $\mathcal{B}(\Sigma)^{*}$,
\[
\langle x\otimes y^{*},PJ^{**}S^{**}\chi_{E}\rangle = \langle j_{X\hat{\otimes}_{\pi}Y^{*}} (x\otimes y^{*}),J^{**}S^{**}\chi_{E}\rangle
\]
\[
= \langle \chi_{E},S^{*}J^{*}j_{X\hat{\otimes}_{\pi}Y^{*}} (x\otimes y^{*})\rangle = \langle \chi_{E},S^{*}(x\otimes y^{*})\rangle,
\]
because
\[
\langle A,J^{*}j_{X\hat{\otimes}_{\pi}Y^{*}}(x\otimes y^{*})\rangle = \langle x\otimes y^{*},J(A)\rangle = \langle y^{*}, j_{Y}Ax\rangle = \langle Ax, y^{*}\rangle = \langle A,x\otimes y^{*}\rangle
\]
for all $A\in\mathcal{L}(X,Y)$. But it is clear that
\[
S^{*}(x\otimes y^{*}) = S_{x}^{*}y^{*}.
\]
Indeed, for every $\varphi \in \mathcal{C}(\Omega)$, we have
\[
\langle \varphi,S^{*}(x\otimes y^{*})\rangle = \langle S\varphi, x\otimes y^{*}\rangle = \langle (S\varphi)x,y^{*}\rangle = \langle S_{x}\varphi, y^{*}\rangle = \langle \varphi,S_{x}^{*}y^{*}\rangle.
\]
Therefore, using that $S_{x}^{*}y^{*}= \mu_{x,y^{*}}$ (see Lemma \ref{l:mxSx}), we obtain
\begin{equation} \label{eq:hatS2}
\langle x\otimes y^{*},PJ^{**}S^{**}\chi_{E} \rangle = \langle \chi_{E},\mu_{x,y^{*}}\rangle = \mu_{x,y^{*}}(E).
\end{equation}

From \eqref{eq:hatS1} and \eqref{eq:hatS2}, we get that \eqref{eq:pjs-2} holds.
\end{proof}

\begin{rmk}
The above proof does not require the uniqueness of the representing measure $m:\Sigma\rightarrow\mathcal{L}(X,Y^{**})$ of $S\in\mathcal{L}(\mathcal{C}(\Omega),\mathcal{L}(X,Y))$ nor how the measure $m$ is built.
\end{rmk}

\begin{cor} [{see \cite[Proposition 4.1]{MOP2}}] \label{c:normSandm}
Let $X$ and $Y$ be Banach spaces and let $\Omega$ be a compact Hausdorff space. Assume that $S\in\mathcal{L}(\mathcal{C}(\Omega),\mathcal{L}(X, Y))$ and let $m:\Sigma\rightarrow \mathcal{L}(X, Y^{**})$ be its representing measure. Then $\|m\|(\Omega)=\|S\|$.
\end{cor}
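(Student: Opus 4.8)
The plan is to route everything through the integration operator and the factorization supplied by Theorem \ref{t:hatS}. Write $\hat{S}\in\mathcal{L}(\mathcal{B}(\Sigma),\mathcal{L}(X,Y^{**}))$ for the integration operator $\hat{S}\varphi=\int_{\Omega}\varphi\,dm$. The first step is the identity $\|m\|(\Omega)=\|\hat{S}\|$: unwinding the definition of the semivariation, each sum $\sum_{i}\varepsilon_{i}m(E_{i})$ over a partition $(E_{i})$ with $|\varepsilon_{i}|\le 1$ equals $\hat{S}$ applied to the simple function $\sum_{i}\varepsilon_{i}\chi_{E_{i}}$, whose sup-norm is at most $1$. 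Hence $\|m\|(\Omega)=\sup\{\|\hat{S}\varphi\|:\varphi\in\mathcal{S}(\Sigma),\ \|\varphi\|_{\infty}\le 1\}$, and this coincides with $\|\hat{S}\|$ by the density of $\mathcal{S}(\Sigma)$ in $\mathcal{B}(\Sigma)$. It then suffices to prove $\|\hat{S}\|=\|S\|$.

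For the inequality $\|\hat{S}\|\le\|S\|$, I would bound the three factors in $\hat{S}=PJ^{**}S^{**}|_{\mathcal{B}(\Sigma)}$. Since $J$ is an isometric embedding, $\|J^{**}\|=1$; since $P=(j_{X\hat{\otimes}_{\pi}Y^{*}})^{*}$ is the canonical norm-one projection of the third dual onto the dual, $\|P\|=1$; and $\|S^{**}\|=\|S\|$. As restriction does not increase the operator norm, $\|\hat{S}\|\le\|P\|\,\|J^{**}\|\,\|S^{**}\|=\|S\|$. For the reverse inequality I would use the relation $JS=\hat{S}|_{\mathcal{C}(\Omega)}$ recorded just before Theorem \ref{t:hatS}, together with the isometry of $J$ and the fact that $\mathcal{C}(\Omega)$ sits inside $\mathcal{B}(\Sigma)$:
\[
\|S\|=\|JS\|=\|\hat{S}|_{\mathcal{C}(\Omega)}\|\le\|\hat{S}\|.
\]
Combining the two inequalities gives $\|m\|(\Omega)=\|\hat{S}\|=\|S\|$.

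The computational burden is minimal once Theorem \ref{t:hatS} is available; the single point deserving care is the identity $\|m\|(\Omega)=\|\hat{S}\|$, namely that the semivariation of the $\mathcal{L}(X,Y^{**})$-valued measure $m$ equals the norm of the Bartle integral on $\mathcal{B}(\Sigma)$. This is the vector-valued analogue of the scalar fact in \cite{DU}, and the only thing to verify is that the supremum over partition-and-sign data in the definition of the semivariation agrees with the supremum over all simple functions in the unit ball, after which density transfers it to $\|\hat{S}\|$. Everything else reduces to collecting the norm-one estimates for $P$ and $J^{**}$ and the norm preservation under bidualization.
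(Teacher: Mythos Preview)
Your proof is correct and follows essentially the same approach as the paper. The paper's argument is the compressed chain $\|S\|=\|JS\|=\|\hat{S}|_{\mathcal{C}(\Omega)}\|\le\|\hat{S}\|=\|PJ^{**}S^{**}|_{\mathcal{B}(\Sigma)}\|\le\|S\|$, citing $\|m\|(\Omega)=\|\hat{S}\|$ as well known from \cite[p.~6, Theorem~13]{DU}; you have simply unpacked each link, including the semivariation identity, in detail.
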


\begin{proof}
It is well known that $\|m\|(\Omega) = \|\hat{S}\|$ (see, e.g., \cite[p. 6, Theorem 13]{DU}). But
\[
\|S\| = \|JS\| = \|\hat{S}|_{\mathcal{C}(\Omega)}\| \leq \|\hat{S}\| = \|PJ^{**}S^{**}|_{\mathcal{B}(\Sigma)}\| \leq \|S\|.
\]
\end{proof}

Thanks to Theorem \ref{t:hatS}, we have an alternative proof for the uniqueness of the representing measure $m$.

\begin{cor} [{see Proposition \ref{p:mUassocuniq}}]
Let $X$ and $Y$ be Banach spaces and let $\Omega$ be a compact Hausdorff space. Then the representing measure $m:\Sigma\rightarrow \mathcal{L}(X,Y^{**})$ of an operator $S\in\mathcal{L}(\mathcal{C}(\Omega), \mathcal{L}(X,Y))$ is unique.
\end{cor}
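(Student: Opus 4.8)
The plan is to exploit the fact that Theorem~\ref{t:hatS} expresses the integration operator $\hat{S}$ entirely in terms of $S$ itself, through the formula $\hat{S} = PJ^{**}S^{**}|_{\mathcal{B}(\Sigma)}$, whose right-hand side makes no reference whatsoever to any particular representing measure. As the remark following Theorem~\ref{t:hatS} emphasizes, its proof uses neither the uniqueness of $m$ nor the way $m$ is constructed; hence the formula is available for \emph{every} representing measure of $S$. This $m$-independence of the right-hand side is precisely what will force uniqueness.

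Concretely, I would suppose that $m_{1},m_{2}:\Sigma\rightarrow\mathcal{L}(X,Y^{**})$ are two representing measures of $S\in\mathcal{L}(\mathcal{C}(\Omega),\mathcal{L}(X,Y))$. Each $m_{i}$ induces its own integration operator $\hat{S}_{i}\in\mathcal{L}(\mathcal{B}(\Sigma),\mathcal{L}(X,Y^{**}))$, given by $\hat{S}_{i}\varphi = \int_{\Omega}\varphi\,dm_{i}$. Applying Theorem~\ref{t:hatS} to $m_{1}$ and to $m_{2}$ separately yields
\[
\hat{S}_{1} = PJ^{**}S^{**}|_{\mathcal{B}(\Sigma)} = \hat{S}_{2},
\]
so that the two integration operators coincide on all of $\mathcal{B}(\Sigma)$.

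It then remains only to read the measures off from the integration operators by evaluating on characteristic functions. Since $\hat{S}_{i}\chi_{E} = \int_{\Omega}\chi_{E}\,dm_{i} = m_{i}(E)$ for every $E\in\Sigma$, the equality $\hat{S}_{1}=\hat{S}_{2}$ gives $m_{1}(E)=m_{2}(E)$ for all $E\in\Sigma$, that is, $m_{1}=m_{2}$.

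In truth there is no genuine obstacle to overcome: all the substantive work has already been carried out in proving Theorem~\ref{t:hatS}, and uniqueness drops out as a purely formal consequence of the fact that $PJ^{**}S^{**}|_{\mathcal{B}(\Sigma)}$ depends on $S$ alone. The only point requiring a moment's care is the licence to apply Theorem~\ref{t:hatS} to two (a priori distinct) measures simultaneously, which is exactly what the accompanying remark guarantees.
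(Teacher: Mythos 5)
Your proof is correct and is essentially the paper's own argument: the paper likewise deduces from Theorem~\ref{t:hatS} that any representing measure satisfies $m(E)=\hat{S}\chi_{E}=PJ^{**}S^{**}\chi_{E}$ for all $E\in\Sigma$, so that $m$ is determined by $S$ alone. Your explicit appeal to the remark after Theorem~\ref{t:hatS} (that its proof uses neither uniqueness nor the construction of $m$) is exactly the point that makes this argument non-circular.
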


\begin{proof}
Let $m:\Sigma\rightarrow \mathcal{L}(X,Y^{**})$ be a representing measure of $S\in\mathcal{L}(\mathcal{C}(\Omega), \mathcal{L}(X,Y))$. Then for all $E\in\Sigma$, we  have
\[
m(E) = \hat{S}\chi_{E} = PJ^{**}S^{**}\chi_{E}.
\]
Hence, if $m_{1},m_{2}:\Sigma\rightarrow \mathcal{L}(X,Y^{**})$ are representing measures of $S$, then $m_{1}(E) = m_{2}(E)$ for all $E\in\Sigma$.
\end{proof}

For $S\in\mathcal{L}(\mathcal{C}(\Omega), \mathcal{L}(X,Y))$, together with its representing measure $m:\Sigma\rightarrow \mathcal{L}(X,Y^{**})$, there also exists its classical representing measure, say $\mu:\Sigma\rightarrow \mathcal{L}(X,Y)^{**}$ (given by the Bartle--Dunford--Schwartz theorem). Let $\hat{\hat{S}}\in\mathcal{L}(\mathcal{B}(\Sigma),\mathcal{L}(X,Y)^{**})$ denote the corresponding integration operator, i.e., $\hat{\hat{S}} = \int_{\Omega}\varphi\,d\mu$, $\varphi\in\mathcal{B}(\Sigma)$. As is well known (this was also mentioned above), $S^{**}|_{\mathcal{B}(\Sigma)} = \hat{\hat{S}}$. Hence, Theorem \ref{t:hatS} tells us that
\[
\hat{S} = PJ^{**}\hat{\hat{S}}.
\]
On characteristic functions, this gives the following formula \eqref{eq:relation-m-mu} which connects the measures $m$ and $\mu$.

\begin{cor} \label{c:formula-m-mu}
Let $X$ and $Y$ be Banach spaces and let $\Omega$ be a compact Hausdorff space. Assume that $S\in\mathcal{L}(\mathcal{C}(\Omega), \mathcal{L}(X,Y))$, and let $m:\Sigma\rightarrow \mathcal{L}(X,Y^{**})$ and $\mu:\Sigma\rightarrow \mathcal{L}(X,Y)^{**}$ be its representing measures. Then
\begin{equation} \label{eq:relation-m-mu}
m(E) = PJ^{**}\mu(E)\ \ \mbox{ for all $E\in\Sigma$. }
\end{equation}

Moreover, if $S$ is weakly compact, then $m$ takes its values in $\mathcal{L}(X,Y)$, and the measures $m$ and $\mu$ coincide. In this case, the measure $m:\Sigma\rightarrow\mathcal{L}(X,Y)$ is countably additive and regular.
\end{cor}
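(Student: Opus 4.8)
The plan is to establish the displayed formula \eqref{eq:relation-m-mu} first, exactly as the paragraph preceding the corollary already indicates, and then to analyze the weakly compact case separately. For the formula, I would start from the two facts recalled just above the statement: that $\hat S = PJ^{**}S^{**}|_{\mathcal B(\Sigma)}$ by Theorem \ref{t:hatS}, and that $S^{**}|_{\mathcal B(\Sigma)} = \hat{\hat S}$ is the integration operator against the classical representing measure $\mu$. Composing these gives $\hat S = PJ^{**}\hat{\hat S}$ as operators on $\mathcal B(\Sigma)$. I would then evaluate both sides on the characteristic function $\chi_E$ for an arbitrary $E\in\Sigma$: the left side yields $\hat S\chi_E = \int_\Omega \chi_E\,dm = m(E)$, while the right side yields $PJ^{**}\hat{\hat S}\chi_E = PJ^{**}\int_\Omega\chi_E\,d\mu = PJ^{**}\mu(E)$. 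This is essentially a one-line specialization and should require no further work.

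For the ``moreover'' part, suppose $S$ is weakly compact. The key structural input is that weak compactness of $S$ forces the second adjoint $S^{**}$ to take values inside the canonical image of $\mathcal{L}(X,Y)$ in its bidual; equivalently, $\hat{\hat S}=S^{**}|_{\mathcal B(\Sigma)}$ maps into $j_{\mathcal L(X,Y)}(\mathcal L(X,Y))$, so that $\mu(E)=\hat{\hat S}\chi_E$ actually lies in $\mathcal L(X,Y)$ for every $E$. I would then need to verify that under this hypothesis the composite $PJ^{**}$ acts on such values as the identity, i.e.\ that $PJ^{**}j_{\mathcal L(X,Y)} = j_{X\hat\otimes_\pi Y^*}^{*}(j_{X\hat\otimes_\pi Y^*})^{**}\cdots$ collapses to the natural embedding of $\mathcal L(X,Y)$ into $\mathcal L(X,Y^{**})$, namely the map $J$. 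Concretely, for $A\in\mathcal L(X,Y)$ viewed inside $\mathcal L(X,Y)^{**}$, one checks $PJ^{**}(j_{\mathcal L(X,Y)}A)=J(A)=j_Y A\in\mathcal L(X,Y^{**})$, which lands in $\mathcal L(X,Y)$ under the identification $\mathcal L(X,Y)\hookrightarrow\mathcal L(X,Y^{**})$. Plugging this into \eqref{eq:relation-m-mu} gives $m(E)=J(\mu(E))$, i.e.\ $m$ and $\mu$ coincide after the isometric identification, and $m$ takes values in $\mathcal L(X,Y)$.

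Finally, once $m$ is known to be an $\mathcal{L}(X,Y)$-valued measure arising as the Bartle--Dunford--Schwartz representing measure of the weakly compact operator $S$, its countable additivity and regularity follow from the classical theory: the Bartle--Dunford--Schwartz theorem asserts that the representing measure of a weakly compact operator on $\mathcal C(\Omega)$ is countably additive and regular (in the strong operator topology, which for an $\mathcal L(X,Y)$-valued measure reduces to countable additivity of each $m_x$ and hence of $m$ weakly, upgraded to norm via the Orlicz--Pettis theorem). I would cite the relevant passage of \cite{DU} for this and transfer it through the identification established above.

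I expect the main obstacle to be the ``moreover'' part, specifically pinning down that $PJ^{**}$ restricts to the canonical embedding $J$ on the image of $\mathcal L(X,Y)$ under $j_{\mathcal L(X,Y)}$. This requires carefully unwinding the definition $P=(j_{X\hat\otimes_\pi Y^*})^{*}$ together with the identifications $\mathcal L(X,Y^{**})=(X\hat\otimes_\pi Y^*)^{*}$ and $\mathcal L(X,Y)^{**}\supset j_{\mathcal L(X,Y)}(\mathcal L(X,Y))$, and testing against elementary tensors $x\otimes y^*$ as in the proof of Theorem \ref{t:hatS}; the duality bookkeeping, rather than any deep idea, is where care is needed.
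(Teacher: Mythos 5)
Your proposal is correct and follows essentially the same route as the paper: the formula is obtained by evaluating $\hat S = PJ^{**}\hat{\hat S}$ on characteristic functions, and the weakly compact case reduces to checking, by testing against elementary tensors $x\otimes y^{*}$, that $PJ^{**}$ restricted to the canonical image of $\mathcal L(X,Y)$ in $\mathcal L(X,Y)^{**}$ is the embedding $J$, after which countable additivity and regularity are quoted from the classical Bartle--Dunford--Schwartz theory in \cite{DU}. The only cosmetic difference is your parenthetical detour through the Orlicz--Pettis theorem, which is unnecessary since the classical theorem already yields norm countable additivity of $\mu$ directly.
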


\begin{proof}
By the above, only the ``moreover'' part needs a proof. From the Bartle--Dunford--Schwartz theory \cite{BDS} (see, e.g., \cite[p. 153, Theorem 5]{DU}), it is well known that if $S\in\mathcal{L}(\mathcal{C}(\Omega), \mathcal{L}(X,Y))$ is weakly compact, then $\mu$ takes its values in $\mathcal{L}(X,Y)$ and $\mu:\Sigma\rightarrow\mathcal{L}(X,Y)$ is countably additive. It is also regular (see \cite[p. 159, Corollary 14]{DU}). But for every $A\in\mathcal{L}(X,Y)$, considering $\mathcal{L}(X,Y)$ embedded in $\mathcal{L}(X,Y)^{**}$, we have
\[
\langle x\otimes y^{*}, PJ^{**}(A)\rangle = \langle j_{X\hat{\otimes}_{\pi}Y^{*}}(x\otimes y^{*}),J^{**}(A)\rangle
\]
\[
= \langle A, J^{*}j_{X\hat{\otimes}_{\pi}Y^{*}}(x\otimes y^{*})\rangle = \langle x\otimes y^{*},j_{Y}A \rangle
\]
for all $x\in X$ and $y^{*}\in Y^{*}$, implying that $PJ^{**}(A) = j_{Y}A$. Therefore, by \eqref{eq:relation-m-mu},
\[
m(E) = PJ^{**}\mu(E) = j_{Y}\mu(E)
\]
for all $E\in\Sigma$. This means that $m$ takes its values in $\mathcal{L}(X,Y)$ and considering $\mathcal{L}(X,Y)$ embedded in $\mathcal{L}(X,Y^{**})$, the measures $m:\Sigma\rightarrow\mathcal{L}(X,Y)$ and $\mu:\Sigma\rightarrow\mathcal{L}(X,Y)$ coincide.
\end{proof}

The next example shows that the fact that the representing measure $m:\Sigma\rightarrow\mathcal{L}(X,Y^{**})$ of an operator $S\in\mathcal{L}(\mathcal{C}(\Omega),\mathcal{L}(X,Y))$ takes its values in $\mathcal{L}(X,Y)$ does not imply the weak compactness of the operator $S$.

\begin{exam}
Denote by $\beta\mathds{N}$ the \v{C}ech--Stone compactification of $\mathds{N}$. As is well known, $\mathcal{C}(\beta\mathds{N}) = \ell_{\infty}$. Consider the identity operator $I\in\mathcal{L}(\ell_{\infty}, \ell_{\infty}) = \mathcal{L}(\mathcal{C}(\beta\mathds{N}), \mathcal{L}(\ell_{1},\mathds{K}))$. Since $\ell_{\infty}$ is not reflexive, $I$ is a non-weakly compact operator. However, its representing measure $m:\Sigma\rightarrow \mathcal{L}(\ell_{1},\mathds{K}^{**})$ takes its values in $\mathcal{L}(\ell_{1},\mathds{K})=\mathcal{L}(\ell_{1},\mathds{K}^{**})$.
\end{exam}

\begin{rmk}
Let $S\in\mathcal{L}(\mathcal{C}(\Omega),\mathcal{L}(X,Y))$ and let $m:\Sigma\rightarrow\mathcal{L}(X,Y^{**})$ be the representing measure of $S$. By definition of the measures $m_{x}$, the measure $m$ takes its values in $\mathcal{L}(X,Y)$ if and only if all $m_{x}:\Sigma\rightarrow Y^{**}$, $x\in X$, take their values in $Y$. Since $m_{x}$ is the representing measure of $S_{x}$, by the Bartle--Dunford--Schwartz theory (see, e.g., \cite[p. 153, Theorem 5]{DU}), this is equivalent to the fact that all operators $S_{x}\in\mathcal{L}(\mathcal{C}(\Omega),Y)$, $x\in X$, are weakly compact. This clearly happens when $S$ is weakly compact.
\end{rmk}

\begin{rmk} \label{r:constructionofm}
Let $S\in\mathcal{L}(\mathcal{C}(\Omega),\mathcal{L}(X,Y))$. In the above, we only needed (and used) the fact (from \cite{MOP2}) that a representing measure $m:\Sigma\rightarrow \mathcal{L}(X,Y^{**})$ exists for $S$. For completeness, let us recall how $m$ is built in \cite[Section 4]{MOP2}. Let $S_{x}\in\mathcal{L}(\mathcal{C}(\Omega), Y)$, $x\in X$, be defined (as above) by $S_{x}\varphi=(S\varphi)x$, $\varphi\in\mathcal{C}(\Omega)$, and let $m_{x}:\Sigma\rightarrow Y^{**}$ be its representing measure (given by the Bartle--Dunford--Schwartz theorem). Then $m:\Sigma\rightarrow \mathcal{L}(X,Y^{**})$ is defined by
\[
\langle y^{*},m(E)x \rangle = \langle y^{*},m_{x}(E)\rangle,\ \ E\in\Sigma,
\]
for all $x\in X$ and $y^{*}\in Y^{*}$.
\end{rmk}

\section{\texorpdfstring{Integration of $p$-continuous vector-valued functions with respect to an operator-valued measure}{Integration of p-continuous vector-valued functions with respect to an operator-valued measure}}\label{s3}

Let $X$ and $Y$ be Banach spaces and let $\Omega$ be a compact Hausdorff space. Let $1\leq p\leq\infty$. The Banach space $\mathcal{C}_{p}(\Omega,X)$ of \emph{$p$-continuous} $X$-valued functions \cite{MOP1} is formed by all $f\in\mathcal{C}(\Omega,X)$ such that $f(\Omega)$ is $p$-compact (i.e., there exists a sequence $(x_{n})\in\ell_{p}(X)$ (or $(x_{n})\in c_{0}(X)$ when $p=\infty$) such that $f(\Omega)\subset \{ \sum_{n}\alpha_{n}x_{n} \, : \, (\alpha_{n})\in B_{\ell_{p'}} \}$). It follows from properties of $p$-compactness that $\mathcal{C}_{p}(\Omega, X) \subset \mathcal{C}_{q}(\Omega, X)$ if $p\leq q$, and $\mathcal{C}_{\infty}(\Omega, X) = \mathcal{C}(\Omega, X)$. The space $\mathcal{C}_{p}(\Omega, X)$ becomes a Banach space endowed with the norm
\[
\|f\|_{\mathcal{C}_{p}(\Omega, X)}=\inf \|(x_{n})\|_{p},
\]
where the infimum is taken over all sequences $(x_{n})\in\ell_{p}(X)$ (or $(x_{n})\in c_{0}(X)$ when $p=\infty$) such that $f(\Omega)\subset \{ \sum_{n}\alpha_{n}x_{n} \, : \, (\alpha_{n})\in B_{\ell_{p'}} \}$, and $\mathcal{C}_{\infty}(\Omega, X) = \mathcal{C}(\Omega, X)$ as Banach spaces (see \cite[Proposition 3.6]{MOP1}).

By Grothendieck's classics \cite{G2} (see, e.g., \cite[pp. 49--50]{R}), we know that
\[
\mathcal{C}(\Omega,X) = \mathcal{C}(\Omega)\hat{\otimes}_{\varepsilon}X
\]
as Banach spaces, where $\varepsilon$ denotes the injective tensor norm, under the canonical isometric isomorphism $\varphi x \leftrightarrow \varphi\otimes x$, $\varphi\in\mathcal{C}(\Omega)$ and $x\in X$. One of the main results of \cite{MOP1} is that
\[
\mathcal{C}_{p}(\Omega,X)=\mathcal{C}(\Omega)\hat{\otimes}_{d_{p}}X
\]
as Banach spaces, where $d_{p}$ denotes the right Chevet--Saphar tensor norm (see \cite{S} or, e.g., \cite[Chapter 6]{R} for the definition and properties; we do not need the definition in this paper).

Let $m:\Sigma\rightarrow \mathcal{L}(X,Y)$ be a vector measure. It is well known that the ``algebraic'' integral $\int_{\Omega}(\cdot)\,dm$ is defined on $\mathcal{S}(\Sigma, X)$. (The definition passes from vector-valued characteristic functions $\chi_{E}x$, $E\in\Sigma$, $x\in X$, to functions in $\mathcal{S}(\Sigma, X)$ by linearity.)

The classical Dinculeanu--Singer representation theorem requires the integration on $\mathcal{C}(\Omega, X)$. The corresponding integral was built by Dinculeanu (see \cite[II.7.1, II.9,1, and p. 398, Theorem 9]{D}; an early idea of this integral can be found in \cite{Go} and \cite{B}). In fact, the Dinculeanu integral was built on $\mathcal{B}(\Sigma, X)$, where $\mathcal{C}(\Omega, X)$ sits as a closed subspace, and then restricted to $\mathcal{C}(\Omega, X)$. On the other hand, the Dinculeanu integral restricted to $\mathcal{S}(\Sigma, X)$ coincides with the ``algebraic'' integral.

The existence of the Dinculeanu integral requires from $m$ much more than does the existence of the elementary Bartle integral, where the semivariation $\|m\|(\Omega)$ was needed to be finite. Namely, a much bigger ``semivariation'' than $\|m\|(\Omega)$ must be finite. Let us call it the \emph{Gowurin--Dinculeanu semivariation} (it was introduced by Gowurin \cite{Go} and deeply studied by Dinculeanu (see, e.g., \cite[I.4]{D})).

To be able to integrate on $\mathcal{C}_{p}(\Omega, X)$, we shall need an ``intermediate semivariation'', depending on $p$, which, in the ``limit'' cases for $\mathcal{C}_{1}(\Omega, X)$ and $\mathcal{C}_{\infty}(\Omega, X) = \mathcal{C}(\Omega, X)$, coincides with the (usual) semivariation $\|m\|(\Omega)$ and the Gowurin--Dinculeanu semivariation, respectively (see Example \ref{ex:q-semivar} below).

Before introducing our ``intermediate semivariation'', we shall need the description of the dual space $\mathcal{C}_{p}(\Omega, X)^{*}$ as a space of operators from $\mathcal{C}(\Omega)$ to $X^{*}$. Recall (see, e.g., \cite[p. 142]{R}) that the dual space operator ideal (we follow the terminology of \cite{OR}) of the Chevet--Saphar tensor norm $d_{p}$ coincides with $\mathcal{P}_{p'}$, i.e., $(Z\hat{\otimes}_{d_{p}}X)^{*} = \mathcal{P}_{p'}(Z,X^{*})$ as Banach spaces (here $Z$ is an arbitrary Banach space). (Recall that $\mathcal{P}_{q} = (\mathcal{P}_{q}, \|\cdot\|_{\mathcal{P}_{q}})$, $1\leq q\leq \infty$, denotes the Banach operator ideal of absolutely $q$-summing operators.) Since $\mathcal{C}_{p}(\Omega, X) = \mathcal{C}(\Omega)\hat{\otimes}_{d_{p}}X$ as Banach spaces, we have
\[
\mathcal{C}_{p}(\Omega, X)^{*} = \mathcal{P}_{p'}(\mathcal{C}(\Omega),X^{*}),
\]
as Banach spaces, under the duality
\[
\langle \varphi x, T\rangle = \langle x, T\varphi \rangle,\ \ \varphi\in\mathcal{C}(\Omega), x\in X, T\in\mathcal{P}_{p'}(\mathcal{C}(\Omega),X^{*}).
\]

Let $m:\Sigma\rightarrow\mathcal{L}(X,Y^{**})$ be a bounded vector measure. Notice that this clearly encompasses the seemingly more general case when $m$ takes its values in $\mathcal{L}(X,Y)$, because $Y$ is canonically embedded in $Y^{**}$. Then, for every $y^{*}\in Y^{*}$,
\[
m_{y^{*}} := (m(\cdot))^{*}y^{*} :\Sigma\rightarrow X^{*}
\]
is clearly a bounded vector measure. From the beginning of Section \ref{s2}, we know that
\[
\langle x, m_{y^{*}}(E)\rangle = \langle x, (m(E))^{*}y^{*}\rangle = \langle y^{*},m(E)x\rangle = \mu_{x,y^{*}}(E),
\]
and therefore, for all $\varphi\in\mathcal{B}(\Sigma)$,
\begin{equation} \label{eq:intmystar}
\langle \int_{\Omega}\varphi\,dm,x\otimes y^{*}\rangle = \int_{\Omega}\varphi\,d\mu_{x,y^{*}} = \langle x, \int_{\Omega}\varphi\,dm_{y^{*}}\rangle.
\end{equation}

Denote by $I_{y^{*}}$ the restriction of the latter integral from $\mathcal{B}(\Sigma)$ to $\mathcal{C}(\Omega)$, i.e., for every $y^{*}\in Y^{*}$,
\[
I_{y^{*}}\varphi = \int_{\Omega}\varphi\,dm_{y^{*}},\ \ \varphi\in\mathcal{C}(\Omega).
\]
Then $I_{y^{*}}\in\mathcal{L}(\mathcal{C}(\Omega),X^{*})$ and $m_{y^{*}}:\Sigma\rightarrow X^{*}$ is its representing measure.

Let $1\leq q\leq\infty$. We define the \emph{$q$-semivariation} $\|m\|_{q}(\Omega)$ of a bounded vector measure $m:\Sigma\rightarrow\mathcal{L}(X,Y^{**})$ by
\[
\|m\|_{q}(\Omega) = \sup_{y^{*}\in B_{Y^{*}}} \|I_{y^{*}}\|_{\mathcal{P}_{q}}.
\]
We say that a bounded vector measure $m:\Sigma\rightarrow\mathcal{L}(X,Y^{**})$ is of \emph{bounded $q$-semivariation} if $\|m\|_{q}(\Omega)<\infty$. It follows from the inclusion theorem for absolutely $q$-summing operators (see, e.g., \cite[p. 39, Theorem 2.8]{DJT}) that
\[
\|m\|_{\infty}(\Omega) \leq \|m\|_{q}(\Omega) \leq \|m\|_{p}(\Omega) \leq \|m\|_{1}(\Omega)\ \ \mbox{ if } 1\leq  p\leq q\leq \infty.
\]

\begin{exam} \label{ex:q-semivar}
Let $X$ and $Y$ be Banach spaces and let $\Omega$ be a compact Hausdorff space. Let $m:\Sigma\rightarrow \mathcal{L}(X,Y^{**})$ be a bounded vector measure. Then $\|m\|_{\infty}(\Omega) = \|m\|(\Omega)$, the semivariation of $m$, and $\|m\|_{1}(\Omega)$ coincides with the Gowurin--Dinculeanu semivariation.

\begin{proof}
Let $y^{*}\in Y^{*}$. Since $(\mathcal{P}_{\infty}, \|\cdot\|_{\mathcal{P}_{\infty}}) =(\mathcal{L}, \|\cdot\|)$, we have that $\|I_{y^{*}}\|_{\mathcal{P}_{\infty}} = \|I_{y^{*}}\|$. And since $m_{y^{*}}$ is the representing measure of $I_{y^{*}}\in\mathcal{L}(\mathcal{C}(\Omega),X^{*})$, we have that $\|I_{y^{*}}\| = \|m_{y^{*}}\|(\Omega)$, by the Bartle--Dunford--Schwartz theorem. Therefore
\[
\|m\|_{\infty}(\Omega) = \sup_{y^{*}\in B_{Y^{*}}}\|m_{y^{*}}\|(\Omega)
\]
\[
= \sup\Big\{\Big\|\sum_{E_{i}\in\Pi}\varepsilon_{i}m_{y^{*}}(E_{i})\Big\|\,:\,y^{*}\in B_{Y^{*}}, |\varepsilon_{i}|\leq 1, \Pi\Big\}
\]
\[
= \sup\Big\{\Big| \langle x, \sum_{E_{i}\in\Pi}\varepsilon_{i}m_{y^{*}}(E_{i})\rangle \Big|\,:\,x\in B_{X}, y^{*}\in B_{Y^{*}}, |\varepsilon_{i}|\leq 1, \Pi\Big\}
\]
\[
= \sup\Big\{\Big| \langle y^{*}, \Big(\sum_{E_{i}\in\Pi}\varepsilon_{i}m(E_{i})\Big)x\rangle \Big|\,:\,x\in B_{X}, y^{*}\in B_{Y^{*}}, |\varepsilon_{i}|\leq 1,\Pi\Big\}
\]
\[
= \sup\Big\{\Big\| \Big(\sum_{E_{i}\in\Pi}\varepsilon_{i}m(E_{i})\Big)x\Big\|\,:\,x\in B_{X}, |\varepsilon_{i}|\leq 1, \Pi\Big\}
\]
\[
= \sup\Big\{\Big\| \sum_{E_{i}\in\Pi}\varepsilon_{i}m(E_{i})\Big\|\,:\, |\varepsilon_{i}|\leq 1, \Pi\Big\} = \|m\|(\Omega).
\]

We know that $\mathcal{P}_{1}(\mathcal{C}(\Omega),X^{*}) = \mathcal{C}_{\infty}(\Omega, X)^{*} = \mathcal{C}(\Omega, X)^{*}$. We also know that $I_{y^{*}}\in\mathcal{L}(\mathcal{C}(\Omega),X^{*})$ is absolutely summing, i.e., $I_{y^{*}}\in\mathcal{P}_{1}(\mathcal{C}(\Omega),X^{*})$ if and only if its representing measure $m_{y^{*}}$ is of bounded variation, and in this case, $\|I_{y^{*}}\|_{\mathcal{P}_{1}} = |m_{y^{*}}|(\Omega)$ (see, e.g., \cite[p. 162, Theorem 3]{DU}). Hence
\begin{equation} \label{eq:defsemivDU-DIN}
\|m\|_{1}(\Omega) = \sup_{y^{*}\in B_{Y^{*}}} |m_{y^{*}}|(\Omega),
\end{equation}
which, thanks to \cite[p. 55, Proposition 5]{D}, coincides with the Gowurin--Dinculeanu semivariation of $m$. Let us recall that in \cite[p. 181]{DU}, formula \eqref{eq:defsemivDU-DIN} is taken as the definition of the Gowurin--Dinculeanu semivariation of $m$.
\end{proof}
\end{exam}

Below, we shall need the following result which, among others, may be used for calculating $\|m\|_{q}(\Omega)$. For $y^{*}\in Y^{*}$, let
\[
\hat{I}_{y^{*}}:=\int_{\Omega}(\cdot)\,dm_{y^{*}} \in \mathcal{L}(\mathcal{B}(\Sigma),X^{*})
\]
denote the integration operator with respect to $m_{y^{*}}$.

\begin{prop} \label{p:qsummingnormofI}
Let $X$ and $Y$ be Banach spaces and let $\Omega$ be a compact Hausdorff space. Let $1\leq q\leq\infty$. Assume that $m:\Sigma\rightarrow \mathcal{L}(X,Y^{**})$ is a bounded vector measure. Then
\[
\|\hat{I}_{y^{*}}\|_{\mathcal{P}_{q}} = \|I_{y^{*}}\|_{\mathcal{P}_{q}} \ \ \ \mbox{ for all } y^{*}\in Y^{*}.
\]
\end{prop}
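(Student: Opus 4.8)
The plan is to establish the two inequalities separately, reducing the whole statement to the operator-ideal properties of $\mathcal{P}_{q}$ together with a single classical fact about biduals. One inequality is immediate. Since the Bartle integral of $\varphi\in\mathcal{C}(\Omega)$ against $m_{y^{*}}$ is computed identically whether $\varphi$ is viewed in $\mathcal{C}(\Omega)$ or in $\mathcal{B}(\Sigma)$, we have $I_{y^{*}} = \hat{I}_{y^{*}}\,\iota$, where $\iota:\mathcal{C}(\Omega)\hookrightarrow\mathcal{B}(\Sigma)$ is the inclusion. As $\mathcal{P}_{q}$ is an operator ideal and $\|\iota\| = 1$, this gives at once
\[
\|I_{y^{*}}\|_{\mathcal{P}_{q}} \leq \|\hat{I}_{y^{*}}\|_{\mathcal{P}_{q}}.
\]

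For the reverse inequality I would pass to the bidual. Applying the classical extension fact recalled at the beginning of Section \ref{s2} (in the case $S = I_{y^{*}}\in\mathcal{L}(\mathcal{C}(\Omega),X^{*})$ with representing measure $m_{y^{*}}$), and using that $m_{y^{*}}$ takes its values in $X^{*}\subset X^{***} = (X^{*})^{**}$, so that $\hat{I}_{y^{*}}$ lands in $X^{*}$, I expect the identity
\[
j_{X^{*}}\hat{I}_{y^{*}} = I_{y^{*}}^{**}\big|_{\mathcal{B}(\Sigma)},
\]
where $\mathcal{B}(\Sigma)$ is regarded as a closed subspace of $\mathcal{C}(\Omega)^{**}$ and $j_{X^{*}}:X^{*}\to X^{***}$ is the canonical embedding. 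I would verify this on the characteristic functions $\chi_{E}$, where both sides equal $j_{X^{*}}m_{y^{*}}(E)$, and then extend by linearity and density, exactly in the spirit of the proof of Theorem \ref{t:hatS}.

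It then remains to chain three norm relations. First, since $j_{X^{*}}$ is an isometric embedding, the scalars $\|\hat{I}_{y^{*}}\varphi_{i}\|$ and $\|j_{X^{*}}\hat{I}_{y^{*}}\varphi_{i}\|$ defining the $q$-summing norm coincide, so $\|j_{X^{*}}\hat{I}_{y^{*}}\|_{\mathcal{P}_{q}} = \|\hat{I}_{y^{*}}\|_{\mathcal{P}_{q}}$. Second, restriction to the subspace $\mathcal{B}(\Sigma)$ cannot increase the $q$-summing norm (apply the ideal property of $\mathcal{P}_{q}$ to $I_{y^{*}}^{**}$ composed with the inclusion $\mathcal{B}(\Sigma)\hookrightarrow\mathcal{C}(\Omega)^{**}$), giving $\|I_{y^{*}}^{**}|_{\mathcal{B}(\Sigma)}\|_{\mathcal{P}_{q}} \leq \|I_{y^{*}}^{**}\|_{\mathcal{P}_{q}}$. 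Third, I would invoke the classical theorem that an operator and its bidual have the same $q$-summing norm, $\|I_{y^{*}}^{**}\|_{\mathcal{P}_{q}} = \|I_{y^{*}}\|_{\mathcal{P}_{q}}$ (see, e.g., \cite[Proposition~2.19]{DJT}). Combining,
\[
\|\hat{I}_{y^{*}}\|_{\mathcal{P}_{q}} = \big\|I_{y^{*}}^{**}\big|_{\mathcal{B}(\Sigma)}\big\|_{\mathcal{P}_{q}} \leq \|I_{y^{*}}^{**}\|_{\mathcal{P}_{q}} = \|I_{y^{*}}\|_{\mathcal{P}_{q}},
\]
which together with the first inequality forces equality.

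The hard part is not in the bookkeeping but in the single cited ingredient $\|I_{y^{*}}^{**}\|_{\mathcal{P}_{q}} = \|I_{y^{*}}\|_{\mathcal{P}_{q}}$: the inequality ``$\geq$'' there is elementary (restriction to $\mathcal{C}(\Omega)\subset\mathcal{C}(\Omega)^{**}$), whereas ``$\leq$'' rests on the local, finite-dimensional character of the $q$-summing norm together with the principle of local reflexivity, and is where I would rely on the literature rather than reprove it. Everything else follows directly from the operator-ideal axioms for $\mathcal{P}_{q}$ and the already-established description of $\hat{I}_{y^{*}}$ as the restriction of $I_{y^{*}}^{**}$ to $\mathcal{B}(\Sigma)$.
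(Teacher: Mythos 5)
Your proposal is correct and takes essentially the same route as the paper: both proofs sandwich the norms via $\|I_{y^{*}}\|_{\mathcal{P}_{q}} \leq \|\hat{I}_{y^{*}}\|_{\mathcal{P}_{q}} \leq \|(I_{y^{*}})^{**}\|_{\mathcal{P}_{q}}$ (restriction on one side, the bidual extension $I_{y^{*}}^{**}|_{\mathcal{B}(\Sigma)} = j_{X^{*}}\hat{I}_{y^{*}}$ on the other) and close the circle with $\|(I_{y^{*}})^{**}\|_{\mathcal{P}_{q}} = \|I_{y^{*}}\|_{\mathcal{P}_{q}}$ from \cite[p.~50, Proposition~2.19]{DJT}. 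The only difference is cosmetic: the paper invokes the extension identity as part of the classical Bartle--Dunford--Schwartz picture recalled in Section~\ref{s2}, whereas you re-verify it on characteristic functions.
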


\begin{proof}
Since $\mathcal{C}(\Omega)\subset\mathcal{B}(\Sigma)\subset\mathcal{C}(\Omega)^{**}$ as closed subspaces, $\hat{I}_{y^{*}}$ is an extension of $I_{y^{*}}$, and $(I_{y^{*}})^{**}$ is an extension of $\hat{I}_{y^{*}}$, we have that
\[
\|I_{y^{*}}\|_{\mathcal{P}_{q}} \leq \|\hat{I}_{y^{*}}\|_{\mathcal{P}_{q}} \leq \|(I_{y^{*}})^{**}\|_{\mathcal{P}_{q}}.
\]
Hence, if $\|I_{y^{*}}\|_{\mathcal{P}_{q}} = \infty$, then also $\|\hat{I}_{y^{*}}\|_{\mathcal{P}_{q}} = \infty$. If $\|I_{y^{*}}\|_{\mathcal{P}_{q}} < \infty$, i.e., $I_{y^{*}}$ is absolutely $q$-summing, then also $(I_{y^{*}})^{**}$ is, and in this case, $\|(I_{y^{*}})^{**}\|_{\mathcal{P}_{q}} = \|I_{y^{*}}\|_{\mathcal{P}_{q}}$ (see, e.g., \cite[p. 50, Proposition 2.19]{DJT}). Therefore $\|I_{y^{*}}\|_{\mathcal{P}_{q}} = \|\hat{I}_{y^{*}}\|_{\mathcal{P}_{q}}$, as desired.
\end{proof}

It is well known (see, e.g., \cite[p. 11]{R}) that $\mathcal{B}(\Sigma)\otimes X\subset \mathcal{B}(\Sigma, X)$ as a linear subspace, under the algebraic identification $\varphi\otimes x \leftrightarrow \varphi x$. This is used in the following result.

\begin{thm} \label{t:def-int-dp}
Let $X$ and $Y$ be Banach spaces and let $\Omega$ be a compact Hausdorff space. Let $1\leq p\leq\infty$. Assume that $m:\Sigma\rightarrow \mathcal{L}(X,Y^{**})$ is a bounded vector measure. Then the formula
\begin{equation} \label{eq:int-dp}
\int_{\Omega}(\varphi x)\,dm = \Big(\int_{\Omega}\varphi\,dm\Big)x,\ \ \varphi\in\mathcal{B}(\Sigma), x\in X,
\end{equation}
defines an integral on $\mathcal{B}(\Sigma)\hat{\otimes}_{d_{p}} X$ with respect to $m$ if and only if $\|m\|_{p'}(\Omega)<\infty$. In this case, the integration operator $\hat{U}$ belongs to $\mathcal{L}(\mathcal{B}(\Sigma)\hat{\otimes}_{d_{p}} X,Y^{**})$, $\|\hat{U}\|=\|m\|_{p'}(\Omega)$, the restriction of $\hat{U}$ to $\mathcal{S}(\Sigma, X) = \mathcal{S}(\Sigma)\otimes X$ concides with the ``algebraic'' integral, and $\hat{U}^{*}y^{*} = \hat{I}_{y^{*}}$ for all $y^{*}\in Y^{*}$. 

Moreover, the measure $m$ takes its values in $\mathcal{L}(X,Y)$ if and only if the integration operator $\hat{U}$ takes its values in $Y$.
\end{thm}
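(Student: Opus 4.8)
The plan is to deduce everything from the duality $(\mathcal{B}(\Sigma)\hat{\otimes}_{d_{p}}X)^{*} = \mathcal{P}_{p'}(\mathcal{B}(\Sigma),X^{*})$ (the case $Z=\mathcal{B}(\Sigma)$ of the identity recalled before the theorem), combined with the identification $\hat{U}^{*}y^{*} = \hat{I}_{y^{*}}$; once these are in place, the ``if and only if'' assertion and the norm equality $\|\hat{U}\| = \|m\|_{p'}(\Omega)$ both drop out of a single computation.

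First I would let $\tilde{U}$ denote the linear map on the (uncompleted) tensor product $\mathcal{B}(\Sigma)\otimes X$ obtained by extending \eqref{eq:int-dp} by linearity, and establish the pairing identity
\[
\langle \tilde{U}(u), y^{*}\rangle = \langle u, \hat{I}_{y^{*}}\rangle, \qquad u\in\mathcal{B}(\Sigma)\otimes X,\ y^{*}\in Y^{*},
\]
where on the right $\hat{I}_{y^{*}}$ is paired with $u$ through $\langle \varphi x, T\rangle = \langle x, T\varphi\rangle$. On an elementary tensor $\varphi x$ this is exactly \eqref{eq:intmystar}, since $\langle (\int_{\Omega}\varphi\,dm)x, y^{*}\rangle = \langle \int_{\Omega}\varphi\,dm, x\otimes y^{*}\rangle = \langle x, \int_{\Omega}\varphi\,dm_{y^{*}}\rangle = \langle x, \hat{I}_{y^{*}}\varphi\rangle$, and the general case follows by linearity.

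Next I would compute the $d_{p}$-operator norm of $\tilde{U}$ as a map into $Y^{**}$, allowing the value $+\infty$. Since $Y^{**}$ is a dual space with predual $Y^{*}$, for $u\in\mathcal{B}(\Sigma)\otimes X$ we have $\|\tilde{U}(u)\|_{Y^{**}} = \sup_{y^{*}\in B_{Y^{*}}}|\langle \tilde{U}(u), y^{*}\rangle| = \sup_{y^{*}\in B_{Y^{*}}}|\langle u, \hat{I}_{y^{*}}\rangle|$. Taking the supremum over $\|u\|_{d_{p}}\leq 1$ and exchanging the two suprema gives
\[
\|\tilde{U}\| = \sup_{y^{*}\in B_{Y^{*}}}\, \sup_{\|u\|_{d_{p}}\leq 1}|\langle u, \hat{I}_{y^{*}}\rangle| = \sup_{y^{*}\in B_{Y^{*}}}\|\hat{I}_{y^{*}}\|_{(\mathcal{B}(\Sigma)\hat{\otimes}_{d_{p}}X)^{*}}.
\]
By the duality, the inner functional norm is exactly $\|\hat{I}_{y^{*}}\|_{\mathcal{P}_{p'}}$ (finite precisely when $\hat{I}_{y^{*}}\in\mathcal{P}_{p'}$), which by Proposition \ref{p:qsummingnormofI} equals $\|I_{y^{*}}\|_{\mathcal{P}_{p'}}$, so the supremum over $B_{Y^{*}}$ is $\|m\|_{p'}(\Omega)$ by definition. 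Thus $\|\tilde{U}\| = \|m\|_{p'}(\Omega)$ as an equality in $[0,\infty]$, so $\tilde{U}$ is $d_{p}$-bounded -- equivalently, \eqref{eq:int-dp} defines an integral on $\mathcal{B}(\Sigma)\hat{\otimes}_{d_{p}}X$ -- if and only if $\|m\|_{p'}(\Omega)<\infty$, and in that case $\tilde{U}$ extends by density to $\hat{U}\in\mathcal{L}(\mathcal{B}(\Sigma)\hat{\otimes}_{d_{p}}X, Y^{**})$ with $\|\hat{U}\| = \|m\|_{p'}(\Omega)$. Passing to the limit in the pairing identity yields $\hat{U}^{*}y^{*} = \hat{I}_{y^{*}}$ for all $y^{*}\in Y^{*}$.

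The remaining assertions are then routine. On a characteristic tensor $\chi_{E}x$, formula \eqref{eq:int-dp} reads $m(E)x$, which is the algebraic integral, so by linearity $\hat{U}$ restricted to $\mathcal{S}(\Sigma,X) = \mathcal{S}(\Sigma)\otimes X$ is the algebraic integral. For the ``moreover'' equivalence, if $\hat{U}$ takes its values in $Y$ then $m(E)x = \hat{U}(\chi_{E}x)\in Y$ for every $E$ and $x$, so $m$ is $\mathcal{L}(X,Y)$-valued; conversely, if $m$ is $\mathcal{L}(X,Y)$-valued then the algebraic integral maps $\mathcal{S}(\Sigma,X)$ into $Y$, and since $\mathcal{S}(\Sigma)\otimes X$ is $d_{p}$-dense in $\mathcal{B}(\Sigma)\hat{\otimes}_{d_{p}}X$ (simple functions approximate bounded measurable ones uniformly and $d_{p}(\psi\otimes x) = \|\psi\|_{\infty}\|x\|$) while $Y$ is closed in $Y^{**}$, continuity of $\hat{U}$ forces $\hat{U}$ into $Y$. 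I expect the one genuinely delicate point to be the supremum exchange together with the correct invocation of the duality, namely that the $d_{p}$-functional norm of $u\mapsto\langle u,\hat{I}_{y^{*}}\rangle$ coincides with $\|\hat{I}_{y^{*}}\|_{\mathcal{P}_{p'}}$ uniformly in $y^{*}$, including the case where it is infinite; everything else is bookkeeping.
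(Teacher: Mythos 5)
Your proposal is correct and follows essentially the same route as the paper: both proofs rest on the pairing identity $\hat{I}_{y^{*}}=\hat{U}^{*}y^{*}$ (the paper's \eqref{eq:hatI=hatU}, derived from \eqref{eq:intmystar}), the duality $(\mathcal{B}(\Sigma)\hat{\otimes}_{d_{p}}X)^{*}=\mathcal{P}_{p'}(\mathcal{B}(\Sigma),X^{*})$, Proposition \ref{p:qsummingnormofI}, and the definition of $\|m\|_{p'}(\Omega)$. The only packaging differences are that you obtain both inequalities of the norm identity at once by exchanging suprema and citing the isometric duality (the paper proves the bound $\|\hat{U}v\|\leq\|m\|_{p'}(\Omega)\|v\|_{d_{p}}$ separately, by an explicit H\"older estimate over representations of $v$), and that you settle the converse half of the ``moreover'' part via $d_{p}$-density of $\mathcal{S}(\Sigma)\otimes X$ together with closedness of $Y$ in $Y^{**}$, where the paper instead runs the entire construction with target space $W:=Y$ or $W:=Y^{**}$.
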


\begin{proof}
First of all, notice that if the main part of the theorem holds true, then the ``if'' part of the ``moreover'' part is clear from \eqref{eq:int-dp}. Indeed, assume that $\textrm{ran}\,\hat{U}\subset Y$. Since
\[
m(E) = \int_{\Omega}\chi_{E}\,dm \ \ \ \mbox{ for all } E\in\Sigma,
\]
by \eqref{eq:int-dp}, we have that
\[
m(E)x = \int_{\Omega}(\chi_{E}x)\,dm = \hat{U}(\chi_{E}\otimes x)\in Y \ \ \ \mbox{ for all } E\in\Sigma \mbox{ and } x\in X.
\]
This means that $\textrm{ran}\,m\subset \mathcal{L}(X,Y)$.

To prove the theorem and to encompass also the ``only if'' part of the ``moreover'' part, let $W:=Y^{**}$ or $W:=Y$.

On the right-hand side of \eqref{eq:int-dp}, the integral is just the elementary Bartle integral with respect to $m$. Denote by $\hat{S}\in\mathcal{L}(\mathcal{B}(\Sigma),\mathcal{L}(X,W))$ this integration operator. Since, as is well known, $\mathcal{L}(Z,\mathcal{L}(X,W))$ is canonically isometrically isomorphic to $\mathcal{L}(Z\otimes_{\pi}X,W) = \mathcal{L}(Z\hat{\otimes}_{\pi}X,W)$ (for any Banach spaces $X$, $W$, and $Z$), there exists a unique linear operator $\hat{U}:\mathcal{B}(\Sigma)\otimes X\rightarrow W$ such that
\[
\hat{U}(\varphi\otimes x) = (\hat{S}\varphi)x,\ \ \varphi\in\mathcal{B}(\Sigma), x\in X,
\]
and $\hat{U}\in\mathcal{L}(\mathcal{B}(\Sigma)\otimes_{\pi}X, W)$. Hence, by \eqref{eq:int-dp},
\[
\int_{\Omega}(\varphi x)\,dm = \hat{U}(\varphi\otimes x),\ \ \varphi\in\mathcal{B}(\Sigma), x\in X,
\]
giving that
\[
\int_{\Omega}f\,dm = \hat{U}f,\ \ f\in\mathcal{S}(\Sigma,X) = \mathcal{S}(\Sigma)\otimes X.
\]

It remains to prove that
\begin{equation} \label{eq:nu}
\nu := \sup\{\|\hat{U}v\|\,:\, v\in \mathcal{B}(\Sigma)\otimes X, \|v\|_{d_{p}}\leq 1 \} = \|m\|_{p'}(\Omega).
\end{equation}
Then, in the case when $\|m\|_{p'}(\Omega)<\infty$ or, equivalently, $\hat{U}\in\mathcal{L}( \mathcal{B}(\Sigma)\otimes_{d_{p}}X,W)$, by passing to the unique continuous linear extension of $\hat{U}$, we get that $\hat{U}\in\mathcal{L}( \mathcal{B}(\Sigma)\hat{\otimes}_{d_{p}}X,W)$ and $\|\hat{U}\| = \|m\|_{p'}(\Omega)$. Therefore, the integral $\int_{\Omega}(\cdot)\,dm$ is defined on $\mathcal{B}(\Sigma)\hat{\otimes}_{d_{p}}X$ by
\[
\int_{\Omega}v\,dm = \hat{U}v,\ \ v\in\mathcal{B}(\Sigma)\hat{\otimes}_{d_{p}}X.
\]

Let us now prove equality \eqref{eq:nu}. Fix an arbitrary $y^{*}\in Y^{*}$. Then, for all $\varphi\in\mathcal{B}(\Sigma)$ and $x\in X$, by \eqref{eq:intmystar}, we have
\begin{equation} \label{eq:hatI=hatU}
\begin{split}
\langle x,\hat{I}_{y^{*}}\varphi\rangle = & \langle \hat{S}\varphi, x\otimes y^{*}\rangle = \langle y^{*},(\hat{S}\varphi)x\rangle
\\
= \langle y^{*},\hat{U}(\varphi\otimes x)\rangle &= \langle \varphi\otimes x,\hat{U}^{*}y^{*}\rangle = \langle x, (\hat{U}^{*}y^{*})\varphi\rangle;
\end{split}
\end{equation}
for the two last equalities, recall that we have
\[
\hat{U}^{*}\in\mathcal{L}(W^{*}, (\mathcal{B}(\Sigma)\otimes_{\pi}X)^{*}),
\]
so that $\hat{U}^{*}y^{*}\in(\mathcal{B}(\Sigma)\otimes_{\pi}X)^{*}=\mathcal{L}(\mathcal{B}(\Sigma),X^{*})$. Therefore, $\hat{I}_{y^{*}}=\hat{U}^{*}y^{*}$ and thus
\[
\|\hat{I}_{y^{*}}\|_{\mathcal{P}_{p'}} = \|\hat{U}^{*}y^{*}\|_{\mathcal{P}_{p'}} = \sup\{ |\langle v, \hat{U}^{*}y^{*}\rangle |\,:\,v\in \mathcal{B}(\Sigma)\otimes X, \|v\|_{d_{p}}\leq 1\}
\]
\[
= \sup\{ |\langle y^{*},\hat{U}v\rangle |\,:\,v\in \mathcal{B}(\Sigma)\otimes X, \|v\|_{d_{p}}\leq 1\} \leq \nu \|y^{*}\|.
\]
Hence,
\[
\nu \geq \|m\|_{p'}(\Omega).
\]

For the reverse inequality, let $v=\sum_{i=1}^{n}\varphi_{i}\otimes x_{i}\in\mathcal{B}(\Sigma)\otimes_{d_{p}}X$. For any $y^{*}\in Y^{*}$, by \eqref{eq:hatI=hatU}, we have
\[
|\langle y^{*},\hat{U}v\rangle| = \Big|\langle y^{*}, \sum_{i=1}^{n}\hat{U}(\varphi_{i}\otimes x_{i})\rangle| = \Big|\sum_{i=1}^{n}\langle y^{*},\hat{U}(\varphi_{i}\otimes x_{i})\rangle|
\]
\[
= \Big|\sum_{i=1}^{n}\langle x_{i},\hat{I}_{y^{*}}\varphi_{i}\rangle| \leq \|(x_{i})_{i=1}^{n}\|_{p} \|(\hat{I}_{y^{*}}\varphi_{i})_{i=1}^{n}\|_{p'} \leq \|(x_{i})_{i=1}^{n}\|_{p} \|\hat{I}_{y^{*}}\|_{\mathcal{P}_{p'}}\|(\varphi_{i})_{i=1}^{n}\|_{p'}^{w}.
\]
Taking first the infimum over all the representations of $v\in \mathcal{B}(\Sigma)\otimes_{d_{p}}X$ and then the supremum over $y^{*}\in B_{Y^{*}}$, by Proposition \ref{p:qsummingnormofI}, we obtain that
\[
\|\hat{U}v\| \leq \|m\|_{p'}(\Omega) \|v\|_{d_{p}},
\]
hence,
\[
\nu \leq \|m\|_{p'}(\Omega),
\]
and \eqref{eq:nu} holds.

Finally, if $\hat{U}\in\mathcal{L}(\mathcal{B}(\Sigma)\hat{\otimes}_{d_{p}}X,W)$, then we have
\[
\hat{U}^{*}\in\mathcal{L}(W^{*},(\mathcal{B}(\Sigma)\hat{\otimes}_{d_{p}}X)^{*}) = \mathcal{L}(W^{*},\mathcal{P}_{p'}(\mathcal{B}(\Sigma),X^{*})),
\]
and equalities \eqref{eq:hatI=hatU} hold true, giving that $\hat{I}_{y^{*}} = \hat{U}^{*}y^{*}$ for all $y^{*}\in Y^{*}$.
\end{proof}

As we mentioned in the beginning of this section, $\mathcal{C}_{p}(\Omega, X) = \mathcal{C}(\Omega)\hat{\otimes}_{d_{p}}X$ as Banach spaces, under the identification $\varphi x \leftrightarrow \varphi\otimes x$. On the other hand, let us observe that $\mathcal{C}(\Omega)\hat{\otimes}_{d_{p}}X$ \emph{is a closed subspace of} $\mathcal{B}(\Sigma)\hat{\otimes}_{d_{p}}X$. Indeed, it is well known that $\mathcal{C}(\Omega)^{*}$ is isometrically isomorphic to an $L_{1}(\mu)$-space for some measure $\mu$, i.e., $\mathcal{C}(\Omega)$ is an \emph{$L_{1}$-predual space}. Thanks to Fakhoury \cite[Corollary 3.3]{F} and Grothendieck \cite[Theorem 1]{G3} (see,  e.g., \cite[pp. 76, 81]{DFS}),  $L_{1}$-predual spaces are ideals in their ``superspaces'' (for more details, see \cite[p. 49]{LLO}). In particular, $\mathcal{C}(\Omega)$ is an ideal in $\mathcal{B}(\Sigma)$. But then (see \cite[Proposition 2.4]{OR}) $\mathcal{C}(\Omega)\otimes_{d_{p}}X$ is a subspace of $\mathcal{B}(\Sigma)\hat{\otimes}_{d_{p}}X$, and therefore $\mathcal{C}(\Omega)\hat{\otimes}_{d_{p}}X=\overline{\mathcal{C}(\Omega)\otimes_{d_{p}}X}$ is a closed subspace of $\mathcal{B}(\Sigma)\hat{\otimes}_{d_{p}}X$.

Therefore $\mathcal{C}_{p}(\Omega, X)$ \emph{is a closed subspace of} $\mathcal{B}(\Sigma)\hat{\otimes}_{d_{p}}X$, and Theorem \ref{t:def-int-dp} almost immediately yields the integration result below (Theorem \ref{t:def-int-U}).

Let $m:\Sigma\rightarrow \mathcal{L}(X,Y^{**})$ be a vector measure of bounded $p'$-semivariation. Denote by
\[
U:=\hat{U}|_{\mathcal{C}_{p}(\Omega, X)} = \hat{U}|_{\mathcal{C}(\Omega)\hat{\otimes}_{d_{p}}X}
\]
the restriction to $\mathcal{C}_{p}(\Omega, X)$ of the integration operator $\hat{U}$ given by Theorem \ref{t:def-int-dp}, i.e.,
\[
Uf = \int_{\Omega}f\,dm,\ \ \ f\in\mathcal{C}_{p}(\Omega, X).
\]

\begin{thm} \label{t:def-int-U}
Let $X$ and $Y$ be Banach spaces and let $\Omega$ be a compact Hausdorff space. Let $1\leq p\leq\infty$. Assume that $m:\Sigma\rightarrow \mathcal{L}(X,Y^{**})$ is a vector measure of bounded $p'$-semivariation. Then the formula \eqref{eq:int-dp} defines an integral on $\mathcal{C}_{p}(\Omega, X)$ with respect to $m$, the integration operator $U$ belongs to $\mathcal{L}(\mathcal{C}_{p}(\Omega, X), Y^{**})$, $\|U\|=\|m\|_{p'}(\Omega)$, and $U^{*}y^{*} = I_{y^{*}}$ for all $y^{*}\in Y^{*}$. 

Moreover, if the integration operator $U$ takes its values in $Y$, in particular, this is the case when the measure $m$ takes its values in $\mathcal{L}(X,Y)$, then
\[
U^{**}(\chi_{E}\otimes x) = m(E)x \ \ \ \mbox{ for all } E\in\Sigma \mbox{ and } x\in X,
\]
where $\chi_{E}\otimes x \in \mathcal{C}_{p}(\Omega,X)^{**}$ is defined in the canonical way:
\[
\langle A, \chi_{E}\otimes x\rangle = \langle A^{*}x, \chi_{E}\rangle, A\in\mathcal{P}_{p'}(\mathcal{C}(\Omega),X^{*}) = \mathcal{C}_{p}(\Omega,X)^{*}.
\]
\end{thm}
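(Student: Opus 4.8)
The plan is to obtain everything in the main part by restriction from Theorem \ref{t:def-int-dp}, and to settle the ``moreover'' part by a direct duality computation. Since it has just been recorded that $\mathcal{C}_{p}(\Omega,X) = \mathcal{C}(\Omega)\hat{\otimes}_{d_{p}}X$ is a closed subspace of $\mathcal{B}(\Sigma)\hat{\otimes}_{d_{p}}X$, and Theorem \ref{t:def-int-dp} gives $\hat{U}\in\mathcal{L}(\mathcal{B}(\Sigma)\hat{\otimes}_{d_{p}}X,Y^{**})$ whenever $\|m\|_{p'}(\Omega)<\infty$, the operator $U=\hat{U}|_{\mathcal{C}_{p}(\Omega,X)}$ is automatically bounded with values in $Y^{**}$. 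This disposes of the first two assertions: the formula \eqref{eq:int-dp} defines the integral on $\mathcal{C}_{p}(\Omega,X)$ and $U\in\mathcal{L}(\mathcal{C}_{p}(\Omega,X),Y^{**})$.

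Next I would identify the adjoint. Writing $\iota$ for the inclusion $\mathcal{C}_{p}(\Omega,X)\hookrightarrow\mathcal{B}(\Sigma)\hat{\otimes}_{d_{p}}X$, we have $U=\hat{U}\iota$, hence $U^{*}=\iota^{*}\hat{U}^{*}$. By Theorem \ref{t:def-int-dp}, $\hat{U}^{*}y^{*}=\hat{I}_{y^{*}}$, and since $I_{y^{*}}$ is by definition the restriction of $\hat{I}_{y^{*}}$ from $\mathcal{B}(\Sigma)$ to $\mathcal{C}(\Omega)$, unwinding the two dualities $\langle\varphi x,T\rangle=\langle x,T\varphi\rangle$ on $\mathcal{B}(\Sigma)\hat{\otimes}_{d_{p}}X$ and on $\mathcal{C}(\Omega)\hat{\otimes}_{d_{p}}X$ shows that $\iota^{*}\hat{I}_{y^{*}}=I_{y^{*}}$; thus $U^{*}y^{*}=I_{y^{*}}$. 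The norm equality is then immediate: recalling that $\mathcal{C}_{p}(\Omega,X)^{*}=\mathcal{P}_{p'}(\mathcal{C}(\Omega),X^{*})$ isometrically, so that $\|U^{*}y^{*}\|=\|I_{y^{*}}\|_{\mathcal{P}_{p'}}$, the definition of $\|m\|_{p'}(\Omega)$ yields
\[
\|U\|=\|U^{*}\|=\sup_{y^{*}\in B_{Y^{*}}}\|U^{*}y^{*}\|=\sup_{y^{*}\in B_{Y^{*}}}\|I_{y^{*}}\|_{\mathcal{P}_{p'}}=\|m\|_{p'}(\Omega).
\]

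For the ``moreover'' part, if $m$ takes its values in $\mathcal{L}(X,Y)$ then, by Theorem \ref{t:def-int-dp}, $\hat{U}$ takes its values in $Y$, and hence so does its restriction $U$; so it suffices to argue under the assumption $\textrm{ran}\,U\subset Y$, which makes $U^{**}$ land in $Y^{**}$ and lets us test against $y^{*}\in Y^{*}$. I would compute, for fixed $E\in\Sigma$, $x\in X$ and $y^{*}\in Y^{*}$,
\[
\langle U^{**}(\chi_{E}\otimes x),y^{*}\rangle=\langle \chi_{E}\otimes x,U^{*}y^{*}\rangle=\langle \chi_{E}\otimes x,I_{y^{*}}\rangle=\langle I_{y^{*}}^{*}x,\chi_{E}\rangle,
\]
the last step being the definition of $\chi_{E}\otimes x\in\mathcal{C}_{p}(\Omega,X)^{**}$. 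Now $I_{y^{*}}^{*}x\in\mathcal{C}(\Omega)^{*}$ is identified via \eqref{eq:intmystar} and the fact that $m_{y^{*}}$ is the representing measure of $I_{y^{*}}$: for $\varphi\in\mathcal{C}(\Omega)$,
\[
\langle\varphi,I_{y^{*}}^{*}x\rangle=\langle x,I_{y^{*}}\varphi\rangle=\langle x,\int_{\Omega}\varphi\,dm_{y^{*}}\rangle=\int_{\Omega}\varphi\,d\mu_{x,y^{*}},
\]
so $I_{y^{*}}^{*}x=\mu_{x,y^{*}}$ as a regular Borel measure. Evaluating the functional $\chi_{E}\in\mathcal{B}(\Sigma)\subset\mathcal{C}(\Omega)^{**}$ on it gives $\langle I_{y^{*}}^{*}x,\chi_{E}\rangle=\mu_{x,y^{*}}(E)=\langle y^{*},m(E)x\rangle$. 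Since $y^{*}\in Y^{*}$ was arbitrary, $U^{**}(\chi_{E}\otimes x)=m(E)x$, as claimed.

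The routine parts are the restriction argument and the norm computation; the step requiring care is the chain of identifications in the ``moreover'' part, where one must correctly pass through three successive dualities --- the pairing defining $\chi_{E}\otimes x$, the Bartle--Dunford--Schwartz identification of $I_{y^{*}}^{*}x$ with the scalar measure $\mu_{x,y^{*}}$, and finally the evaluation of $\chi_{E}$ as an element of $\mathcal{C}(\Omega)^{**}$ --- while keeping track of where $Y$ rather than $Y^{**}$ is actually used.
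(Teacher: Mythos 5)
Your proposal is correct and follows essentially the same route as the paper's own proof: the main part is obtained by restricting $\hat{U}$ from Theorem \ref{t:def-int-dp} and identifying $U^{*}y^{*}=I_{y^{*}}$ (the paper proves the norm equality by two inequalities, $\|U\|\leq\|\hat{U}\|=\|m\|_{p'}(\Omega)$ and $\|I_{y^{*}}\|_{\mathcal{P}_{p'}}\leq\|U\|\|y^{*}\|$, rather than your one-line supremum computation, but this is cosmetic), and the ``moreover'' part is the same duality chain through $\chi_{E}\otimes x$. The only phrasing difference there is that the paper passes via $\langle x,(I_{y^{*}})^{**}\chi_{E}\rangle=\langle x,\hat{I}_{y^{*}}\chi_{E}\rangle$ and formula \eqref{eq:hatI=hatU}, whereas you invoke the Riesz identification $(I_{y^{*}})^{*}x=\mu_{x,y^{*}}$ and evaluate $\chi_{E}$ directly; both steps rest on exactly the same fact, namely that $m_{y^{*}}$ is the Bartle--Dunford--Schwartz representing measure of $I_{y^{*}}$, as asserted in Section \ref{s3}.
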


\begin{proof}
For the main part of the theorem, in view of Theorem \ref{t:def-int-dp}, we only need to show that $\|U\| \geq \|m\|_{p'}(\Omega)$ (because $\|U\|\leq\|\hat{U}\|=\|m\|_{p'}(\Omega)$) and $U^{*}y^{*}=I_{y^{*}}$ for all $y^{*}\in Y^{*}$.

Let $y^{*}\in Y^{*}$. Using that $U\in\mathcal{L}(\mathcal{C}(\Omega)\hat{\otimes}_{d_{p}}X,Y^{**})$ and $(\mathcal{C}(\Omega)\hat{\otimes}_{d_{p}}X)^{*} = \mathcal{P}_{p'}(\mathcal{C}(\Omega),X^{*})$, so that $U^{*}\in\mathcal{L}(Y^{***},\mathcal{P}_{p'}(\mathcal{C}(\Omega),X^{*}))$, we get from \eqref{eq:hatI=hatU} that
\[
\langle x, I_{y^{*}}\varphi \rangle = \langle y^{*},U(\varphi\otimes x)\rangle = \langle \varphi\otimes x, U^{*}y^{*}\rangle = \langle x,(U^{*}y^{*})\varphi\rangle
\]
for all $x\in X$ and $\varphi\in\mathcal{C}(\Omega)$. Therefore $I_{y^{*}} = U^{*}y^{*}$ and
\[
\|I_{y^{*}}\|_{\mathcal{P}_{p'}} = \|U^{*}y^{*}\|_{\mathcal{P}_{p'}} \leq \|U^{*}\|\|y^{*}\| = \|U\|\|y^{*}\|
\]
for all $y^{*}\in Y^{*}$. This yields that
\[
\|m\|_{p'}(\Omega) = \sup_{y^{*}\in B_{Y^{*}}} \|I_{y^{*}}\|_{\mathcal{P}_{p'}} \leq \|U\|.
\]

Now, for the ``moreover'' part, assume that $\textrm{ran}\,U\subset Y$. Then $U\in\mathcal{L}(\mathcal{C}_{p}(\Omega,X),Y)$. Let $E\in\Sigma$, $x\in X$, and $y^{*}\in Y^{*}$. Then,
\[
\langle y^{*}, U^{**}(\chi_{E}\otimes x)\rangle = \langle U^{*}y^{*}, \chi_{E}\otimes x\rangle = \langle I_{y^{*}},\chi_{E}\otimes x\rangle = \langle (I_{y^{*}})^{*}x,\chi_{E}\rangle
\]
\[
= \langle x,(I_{y^{*}})^{**}\chi_{E}\rangle = \langle x, \hat{I}_{y^{*}}\chi_{E}\rangle = \langle y^{*}, \hat{U}(\chi_{E}\otimes x)\rangle,
\]
where the last equality holds by \eqref{eq:hatI=hatU}. Therefore, $U^{**}(\chi_{E}\otimes x) = \hat{U}(\chi_{E}\otimes x)$ for all $E\in\Sigma$ and $x\in X$. But, by \eqref{eq:int-dp}, we have that
\[
\hat{U}(\chi_{E}\otimes x) = \Big(\int_{\Omega}\chi_{E}\,dm\Big)x = m(E)x,
\]
proving that $U^{**}(\chi_{E}\otimes x) = m(E)x$ for all $E\in\Sigma$ and $x\in X$.

Finally, let us recall from Theorem \ref{t:def-int-dp} that $\textrm{ran}\,m\subset\mathcal{L}(X,Y)$ if and only if $\textrm{ran}\,\hat{U}\subset Y$. Hence, in this case, $\textrm{ran}\,U\subset Y$.
\end{proof}

\begin{rmk}
Form Example \ref{ex:q-semivar} and Theorem \ref{t:def-int-U}, it is clear that, in the special case when $p=\infty$, our integral coincides with the Dinculeanu integral from \cite{D}.
\end{rmk}

\begin{rmk}
Our notion of the $q$-semivariation is different from the notion ``$q$-semivariation'' introduced in Dinculeanu's book \cite[p. 246]{D}. Let us call the latter ``the Dinculeanu $q$-semivariation''. Its definition is as follows.

Let $1\leq q\leq \infty$ and let $\mu:\Sigma\rightarrow \mathds{R}$ be a positive finite measure; we may assume that $\mu(\Omega)=1$. For a vector measure $m:\Sigma\rightarrow \mathcal{L}(X,Y)$, the \emph{Dinculeanu $q$-semivariation} on $\Omega$ (see \cite[p. 246]{D}) is defined by
\[
\tilde{m}_{q}(\Omega) = \sup \Big\{\Big\| \sum_{E_{i}\in\Pi} m(E_{i})x_{i}\Big\|\Big\},
\]
where the supremum is taken over all finite partitions $\Pi = (E_{i})_{i=1}^{n}$ of $\Omega$ and all finite systems $(x_{i})_{i=1}^{n}\subset X$ such that $\|\sum_{i=1}^{n}\chi_{E_{i}}x_{i}\|_{L_{q'}(\mu,X)} \leq 1$, $n\in\mathds{N}$. This notion is used in \cite[II.13]{D} to obtain the integral representation of an operator $U\in\mathcal{L}(L_{p}(\mu, X), Y)$, $1\leq p < \infty$, with respect to a vector measure $m:\Sigma\rightarrow \mathcal{L}(X,Y)$ such that $\tilde{m}_{p'}(\Omega)<\infty$.

It can be easily verified that $\|m\|_{1}(\Omega)\leq \tilde{m}_{1}(\Omega)$ and $\|m\|_{1}(\Omega)= \tilde{m}_{1}(\Omega)$ if $m$ is absolutely continuous with respect to $\mu$ (see \cite[p. 246]{D}). Since also $\tilde{m}_{1}(\Omega) \leq \tilde{m}_{q}(\Omega)$ (see \cite[p. 247]{D}), we have that
\[
\|m\|_{q}(\Omega) \leq \|m\|_{1}(\Omega) \leq \tilde{m}_{1}(\Omega) \leq \tilde{m}_{q}(\Omega).
\]
\end{rmk}

\section{\texorpdfstring{Representing measure of $U\in\mathcal{L}(\mathcal{C}_{p}(\Omega,X),Y)$}{Representing measure of U in L(Cp(K,X),Y)}}\label{s4}

Let $X$ and $Y$ be Banach spaces and let $\Omega$ be a compact Hausdorff space. Let $1\leq p\leq\infty$. Basing on Theorem \ref{t:def-int-U}, we may give the following definition whose special case when $p=\infty$, thanks to Example \ref{ex:q-semivar}, coincides with the classical one, known from the Dinculeanu--Singer theorem.

\begin{dfn} \label{d:representingmU}
Let $U\in\mathcal{L}(\mathcal{C}_{p}(\Omega,X),Y)$. A \emph{representing measure} of $U$ is a vector measure $m:\Sigma\rightarrow \mathcal{L}(X,Y^{**})$ of bounded $p'$-semivariation which satisfies
\begin{equation} \label{eq:intU}
Uf = \int_{\Omega}f\,dm \ \mbox{ for all } f\in\mathcal{C}_{p}(\Omega,X).
\end{equation}
\end{dfn}

\begin{rmk}
In the classical case of $\mathcal{C}(\Omega,X) = \mathcal{C}_{\infty}(\Omega,X)$, Definition \ref{d:representingmU} differs from the definition of representing measure by Brooks and Lewis \cite[Definition 2.9]{BrLe}. Namely, we do not require that the measures $m_{y^{*}}:\Sigma\rightarrow X^{*}$, $y^{*}\in Y^{*}$ (see Section \ref{s3}) were regular. They have this regularity property thanks to Theorem \ref{t:Dinc-Sing-general} below. More precisely, the regularity holds whenever $p\neq 1$ (and this condition is essential by Example \ref{ex:regularityofm}).
\end{rmk}

\begin{thm} \label{t:mUassocismU}
Let $X$ and $Y$ be Banach spaces and let $\Omega$ be a compact Hausdorff space. Let $1\leq p\leq\infty$. Assume that $U\in\mathcal{L}(\mathcal{C}_{p}(\Omega,X), Y)$, and let $m:\Sigma\rightarrow \mathcal{L}(X,Y^{**})$ be the representing measure of the associated operator $U^{\#}\in\mathcal{L}(\mathcal{C}(\Omega), \mathcal{L}(X, Y))$. Then $m$ is a representing measure of $U$, $I_{y^{*}}=U^{*}y^{*}\in\mathcal{P}_{p'}(\mathcal{C}(\Omega),X^{*})$ for all $y^{*}\in Y^{*}$, $\|U\|=\|m\|_{p'}(\Omega)$, and
\[
U^{**}(\chi_{E}\otimes x) = m(E)x \ \ \ \mbox{ for all } E\in\Sigma \mbox{ and } x\in X.
\]
\end{thm}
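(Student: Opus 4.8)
The plan is to exploit the connection between $U$ and its associated operator $U^{\#}$, together with the integration theory built in Section \ref{s3}. The crucial observation is that $m$, being the representing measure of $U^{\#}$, already has bounded semivariation; what we must establish first is that it has bounded $p'$-semivariation, which is precisely what is needed for the integral $\int_{\Omega}f\,dm$ to make sense on $\mathcal{C}_{p}(\Omega,X)$ and which will simultaneously deliver the norm equality. The key identity to unlock everything is that the operators $I_{y^{*}}\in\mathcal{L}(\mathcal{C}(\Omega),X^{*})$ attached to $m$ coincide with the adjoint components $U^{*}y^{*}$.

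First I would fix $y^{*}\in Y^{*}$ and compute $U^{*}y^{*}$ as an element of $\mathcal{C}_{p}(\Omega,X)^{*} = \mathcal{P}_{p'}(\mathcal{C}(\Omega),X^{*})$, using the duality $\langle \varphi x, T\rangle = \langle x, T\varphi\rangle$ recalled in Section \ref{s3}. For $\varphi\in\mathcal{C}(\Omega)$ and $x\in X$ I would unwind
\[
\langle x, (U^{*}y^{*})\varphi\rangle = \langle \varphi x, U^{*}y^{*}\rangle = \langle y^{*}, U(\varphi x)\rangle = \langle y^{*}, (U^{\#}\varphi)x\rangle,
\]
where the last equality is the defining relation $U(\varphi x) = (U^{\#}\varphi)x$ of the associated operator. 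On the other hand, since $m$ is the representing measure of $U^{\#}$, Lemma \ref{l:mxSx} (applied to $S=U^{\#}$, so $S_{x}=(U^{\#})_{x}$) gives $\langle y^{*},(U^{\#}\varphi)x\rangle = \langle y^{*},\int_{\Omega}\varphi\,dm_{x}\rangle = \langle x, I_{y^{*}}\varphi\rangle$ via \eqref{eq:intmystar}. Comparing the two computations yields $U^{*}y^{*} = I_{y^{*}}$ as functionals on $\mathcal{C}_{p}(\Omega,X)$, hence as operators in $\mathcal{L}(\mathcal{C}(\Omega),X^{*})$.

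The payoff is immediate: since $U^{*}y^{*} = I_{y^{*}}$ lies in $\mathcal{P}_{p'}(\mathcal{C}(\Omega),X^{*})$ with $\|I_{y^{*}}\|_{\mathcal{P}_{p'}} = \|U^{*}y^{*}\|_{\mathcal{P}_{p'}} \leq \|U^{*}\|\,\|y^{*}\| = \|U\|\,\|y^{*}\|$, taking the supremum over $y^{*}\in B_{Y^{*}}$ gives $\|m\|_{p'}(\Omega) = \sup_{y^{*}\in B_{Y^{*}}}\|I_{y^{*}}\|_{\mathcal{P}_{p'}} \leq \|U\| < \infty$, so $m$ has bounded $p'$-semivariation. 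Now Theorem \ref{t:def-int-U} applies: the integral is defined on $\mathcal{C}_{p}(\Omega,X)$, the integration operator $U_{m}f := \int_{\Omega}f\,dm$ satisfies $\|U_{m}\| = \|m\|_{p'}(\Omega)$, and $U_{m}^{*}y^{*} = I_{y^{*}}$ for every $y^{*}$. Since we have shown $U^{*}y^{*} = I_{y^{*}} = U_{m}^{*}y^{*}$ for all $y^{*}\in Y^{*}$, the adjoints agree, whence $U = U_{m}$; that is, $Uf = \int_{\Omega}f\,dm$ for all $f\in\mathcal{C}_{p}(\Omega,X)$, so $m$ is a representing measure of $U$. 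The norm equality $\|U\| = \|m\|_{p'}(\Omega)$ follows from the two opposite inequalities just obtained, and the final formula $U^{**}(\chi_{E}\otimes x) = m(E)x$ is read off directly from the ``moreover'' part of Theorem \ref{t:def-int-U}, noting that $U$ takes values in $Y$ by hypothesis.

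I expect the only real subtlety to be bookkeeping with the several dual pairings and the canonical identifications $\mathcal{C}_{p}(\Omega,X)^{*} = \mathcal{P}_{p'}(\mathcal{C}(\Omega),X^{*})$ and $\mathcal{L}(X,Y^{**}) = (X\hat{\otimes}_{\pi}Y^{*})^{*}$ — in particular making sure that $I_{y^{*}}$, a priori an element of $\mathcal{L}(\mathcal{C}(\Omega),X^{*})$ attached to $m$ in Section \ref{s3}, is being compared with $U^{*}y^{*}$ in the correct space. There is no genuine analytic difficulty: once the identity $U^{*}y^{*} = I_{y^{*}}$ is secured, everything is a clean consequence of Theorem \ref{t:def-int-U} and the uniqueness machinery already in place. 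The conceptual heart is simply that passing to $U^{\#}$ converts the unknown representing measure of $U$ into the already-constructed representing measure of an operator on $\mathcal{C}(\Omega)$.
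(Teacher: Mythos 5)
Your proposal is correct and follows essentially the same route as the paper's proof: both hinge on the identity $I_{y^{*}}=U^{*}y^{*}$ obtained by the same duality computation in $\mathcal{C}_{p}(\Omega,X)^{*}=\mathcal{P}_{p'}(\mathcal{C}(\Omega),X^{*})$, deduce from it that $\|m\|_{p'}(\Omega)\leq\|U\|<\infty$, and then invoke Theorem \ref{t:def-int-U} for the integration operator, the norm equality, and the formula $U^{**}(\chi_{E}\otimes x)=m(E)x$. The only difference is in the last step: where the paper verifies $Uf=\int_{\Omega}f\,dm$ on elementary tensors $\varphi x$ and extends by linearity and density (taking care that the approximating limits live in $Y$ versus $Y^{**}$), you conclude $U=U_{m}$ from the agreement of the adjoints on $Y^{*}$, which separates points of $Y^{**}$ --- an equally valid and slightly slicker way to finish.
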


\begin{proof}
We know that $m:\Sigma\rightarrow \mathcal{L}(X,Y^{**})$ is a bounded vector measure. For all $\varphi\in\mathcal{C}(\Omega)$, $x\in X$, and $y^{*}\in Y^{*}$, by \eqref{eq:intmystar}, we have that
\[
\langle x, I_{y^{*}}\varphi \rangle = \langle \int_{\Omega}\varphi\,dm, x\otimes y^{*}\rangle = \langle U^{\#}\varphi, x\otimes y^{*} \rangle = \langle y^{*}, (U^{\#}\varphi)x \rangle
\]
\[
= \langle y^{*}, U(\varphi\otimes x) \rangle = \langle U^{*}y^{*},\varphi\otimes x \rangle = \langle x,(U^{*}y^{*})\varphi \rangle.
\]
Hence $I_{y^{*}} = U^{*}y^{*}$ for all $y^{*}\in Y^{*}$. Therefore $I_{y^{*}}\in\mathcal{P}_{p'}(\mathcal{C}(\Omega),X^{*})$ and
\[
\|m\|_{p'}(\Omega) = \sup_{y^{*}\in B_{Y^{*}}} \|U^{*}y^{*}\|_{\mathcal{P}_{p'}} = \|U^{*}\| = \|U\|<\infty.
\]
Since $m$ is of bounded $p'$-semivariation, the formula \eqref{eq:int-dp} defines
\[
\int_{\Omega}(\cdot)\,dm\in\mathcal{L}(\mathcal{C}_{p}(\Omega, X), Y^{**})
\]
(see Theorem \ref{t:def-int-U}). We only need to show \eqref{eq:intU}, because then also the last claim holds true thanks to Theorem \ref{t:def-int-U}.

Let $\varphi\in\mathcal{C}(\Omega)$ and $x\in X$. Then
\[
U(\varphi x) = (U^{\#}\varphi)x = \Big(\int_{\Omega}\varphi\,dm\Big)x = \int_{\Omega}(\varphi x)\,dm
\]
by \eqref{eq:int-dp}. It is well known (see, e.g., \cite[p. 11]{R}) that $\mathcal{C}(\Omega)\otimes X\subset \mathcal{C}_{p}(\Omega, X)$ as a linear subspace (under the algebraic identification $\varphi\otimes x \leftrightarrow \varphi x$ that was used in Section \ref{s3}). Therefore, by linearity, \eqref{eq:intU} holds for every $f\in\mathcal{C}(\Omega)\otimes X$. If now $f\in\mathcal{C}_{p}(\Omega,X) = \mathcal{C}(\Omega)\hat{\otimes}_{d_{p}} X$ is arbitrary, then $f = \lim_{n}f_{n}$ in $\mathcal{C}_{p}(\Omega, X)$ for some $f_{n}\in\mathcal{C}(\Omega)\otimes X$. Hence,
\[
Uf = \lim_{n}Uf_{n} = \lim_{n}\int_{\Omega}f_{n}\,dm \mbox{ in } Y.
\]
On the other hand, by the definition of the integral,
\[
\int_{\Omega}f\,dm = \lim_{n}\int_{\Omega}f_{n}\,dm \mbox{ in } Y^{**}.
\]
Consequently, \eqref{eq:intU} holds.
\end{proof}

Theorem \ref{t:mUassocismU} shows that a representing measure of $U\in\mathcal{L}(\mathcal{C}_{p}(\Omega,X),Y)$ may be defined as the representing measure of its associated operator $U^{\#}$. Now we see that this is, in fact, the unique way to define a representing measure $m:\Sigma\rightarrow \mathcal{L}(X,Y^{**})$ for $U$.

\begin{prop} \label{p:mUismUassoc}
Let $X$ and $Y$ be Banach spaces and let $\Omega$ be a compact Hausdorff space. Let $1\leq p\leq\infty$. Assume that $m:\Sigma\rightarrow \mathcal{L}(X,Y^{**})$ is a representing measure of $U\in\mathcal{L}(\mathcal{C}_{p}(\Omega,X), Y)$. Then $m$ is the representing measure of $U^{\#}$.
\end{prop}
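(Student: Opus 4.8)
The plan is to verify directly that $m$ meets the two requirements for being the representing measure of $U^{\#}$: that it is a bounded vector measure $\Sigma\rightarrow\mathcal{L}(X,Y^{**})$, and that $U^{\#}\varphi = \int_{\Omega}\varphi\,dm$ for every $\varphi\in\mathcal{C}(\Omega)$ (where, as throughout, the equality is read through the embedding $J:\mathcal{L}(X,Y)\rightarrow\mathcal{L}(X,Y^{**})$, $J(A)=j_{Y}A$). Boundedness is immediate: since $m$ has bounded $p'$-semivariation by hypothesis, the inequality $\|m\|_{\infty}(\Omega)\leq\|m\|_{p'}(\Omega)$ from Section \ref{s3}, together with the identity $\|m\|_{\infty}(\Omega)=\|m\|(\Omega)$ of Example \ref{ex:q-semivar}, gives $\|m\|(\Omega)\leq\|m\|_{p'}(\Omega)<\infty$, so $m$ is a bounded vector measure.

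For the representing identity, the key observation is that $\varphi x$ lies in $\mathcal{C}_{p}(\Omega,X)$ for every $\varphi\in\mathcal{C}(\Omega)$ and $x\in X$, since $\mathcal{C}(\Omega)\otimes X\subset\mathcal{C}_{p}(\Omega,X)$ (as recalled in the proof of Theorem \ref{t:mUassocismU}). I would then simply chain together three facts: the hypothesis that $m$ represents $U$ (Definition \ref{d:representingmU}) applied to $f=\varphi x$, the formula \eqref{eq:int-dp}, and the definition $(U^{\#}\varphi)x=U(\varphi x)$. Concretely,
\[
j_{Y}\big((U^{\#}\varphi)x\big) = j_{Y}\big(U(\varphi x)\big) = \int_{\Omega}(\varphi x)\,dm = \Big(\int_{\Omega}\varphi\,dm\Big)x,
\]
where the middle equality is the representing property of $U$ read in $Y^{**}$, and the last is \eqref{eq:int-dp}.

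Since this holds for every $x\in X$, it says exactly that $J(U^{\#}\varphi)=\int_{\Omega}\varphi\,dm$ as elements of $\mathcal{L}(X,Y^{**})$, for every $\varphi\in\mathcal{C}(\Omega)$, which is precisely the defining equation for $m$ to be a representing measure of $U^{\#}$; the uniqueness furnished by Proposition \ref{p:mUassocuniq} then justifies calling it \emph{the} representing measure. I expect no real obstacle here, in line with the remark in the Introduction that this is the ``easy'' direction: the only point demanding care is the consistent interpretation of the equalities in $Y^{**}$ and $\mathcal{L}(X,Y^{**})$ through the canonical embeddings $j_{Y}$ and $J$, since $U$ and $U^{\#}$ take values in $Y$ and $\mathcal{L}(X,Y)$ while the integrals a priori land in $Y^{**}$ and $\mathcal{L}(X,Y^{**})$.
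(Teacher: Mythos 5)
Your proof is correct and is essentially the paper's own argument: the chain $(U^{\#}\varphi)x = U(\varphi x) = \int_{\Omega}(\varphi x)\,dm = \bigl(\int_{\Omega}\varphi\,dm\bigr)x$ via Definition \ref{d:representingmU} and formula \eqref{eq:int-dp} is exactly what the paper does. Your additional remarks (boundedness of $m$, which is already built into the definition of bounded $p'$-semivariation, and the explicit bookkeeping of the embeddings $j_{Y}$ and $J$) are harmless elaborations, not a different route.
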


\begin{proof}
Let $\varphi\in\mathcal{C}(\Omega)$ and $x\in X$. Then, using \eqref{eq:int-dp}, we have
\[
(U^{\#}\varphi)x = U(\varphi x) = \int_{\Omega}(\varphi x)\,dm = \Big(\int_{\Omega}\varphi\,dm\Big)x.
\]
Thus
\[
U^{\#}\varphi = \int_{\Omega}\varphi\,dm, \ \ \varphi\in\mathcal{C}(\Omega).
\]
\end{proof}

Since the representing measure of $U^{\#}$ is unique (see Proposition \ref{p:mUassocuniq}), the following is immediate from Proposition \ref{p:mUismUassoc}.

\begin{cor} \label{c:mofUisunique}
Let $X$ and $Y$ be Banach spaces and let $\Omega$ be a compact Hausdorff space. Let $1\leq p\leq\infty$. Then the representing measure $m:\Sigma\rightarrow \mathcal{L}(X,Y^{**})$ of an operator $U\in\mathcal{L}(\mathcal{C}_{p}(\Omega, X), Y)$ is unique.
\end{cor}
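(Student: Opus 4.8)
The plan is to deduce the uniqueness for $U$ directly from the two preceding results by passing through the associated operator $U^{\#}$. The key observation is that a representing measure of $U\in\mathcal{L}(\mathcal{C}_{p}(\Omega,X),Y)$ is completely determined once we know it is forced to be the representing measure of $U^{\#}\in\mathcal{L}(\mathcal{C}(\Omega),\mathcal{L}(X,Y))$, for which uniqueness is already in hand.

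Concretely, I would argue as follows. Suppose $m_{1},m_{2}:\Sigma\rightarrow\mathcal{L}(X,Y^{**})$ are both representing measures of $U$ in the sense of Definition \ref{d:representingmU}. By Proposition \ref{p:mUismUassoc}, each $m_{i}$ is then a representing measure of the associated operator $U^{\#}$, that is, $U^{\#}\varphi = \int_{\Omega}\varphi\,dm_{i}$ for all $\varphi\in\mathcal{C}(\Omega)$ and $i=1,2$. But by Proposition \ref{p:mUassocuniq} the representing measure of $U^{\#}$ is unique, so $m_{1}=m_{2}$.

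I do not anticipate any genuine obstacle here: the statement is an immediate corollary, and the only conceptual ingredient is the passage $U\mapsto U^{\#}$, whose compatibility with the integral representation was precisely the content of Proposition \ref{p:mUismUassoc}. The substantive work has already been carried out earlier, namely in establishing that the integration on $\mathcal{C}_{p}(\Omega,X)$ is governed by the $p'$-semivariation (Theorem \ref{t:def-int-U}) and in proving uniqueness for operators on $\mathcal{C}(\Omega)$ with values in $\mathcal{L}(X,Y)$ (Proposition \ref{p:mUassocuniq}, equivalently via the formula $m(E)=PJ^{**}S^{**}\chi_{E}$). Thus the proof reduces to citing these two facts in sequence, with no further computation required.
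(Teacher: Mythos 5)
Your proof is correct and is precisely the paper's own argument: the paper derives this corollary by noting that any representing measure of $U$ is, by Proposition \ref{p:mUismUassoc}, a representing measure of $U^{\#}$, whose representing measure is unique by Proposition \ref{p:mUassocuniq}. No gaps, and no difference in approach.
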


In view of Proposition \ref{p:mUismUassoc} and Corollary \ref{c:formula-m-mu}, the next corollary is immediate; the operators $J$ and $P$ were introduced before Theorem \ref{t:hatS}.

\begin{cor} \label{c:relation-m-m-mu}
Let $X$ and $Y$ be Banach spaces and let $\Omega$ be a compact Hausdorff space. Let $1\leq p\leq\infty$. Assume that $U\in\mathcal{L}(\mathcal{C}_{p}(\Omega,X),Y)$, let $m:\Sigma\rightarrow \mathcal{L}(X,Y^{**})$ be its representing measure, and let $\mu:\Sigma\rightarrow\mathcal{L}(X,Y)^{**}$ be the (classical) representing measure of the associated operator $U^{\#}\in\mathcal{L}(\mathcal{C}(\Omega),\mathcal{L}(X,Y))$. Then $m(E)=PJ^{**}\mu(E)$ for all $E\in\Sigma$.

Moreover, if $U^{\#}$ is weakly compact, then $m$ takes its values in $\mathcal{L}(X,Y)$, and the measures $m$ and $\mu$ coincide. In this case, the measure $m:\Sigma\rightarrow \mathcal{L}(X,Y)$ is countably additive and regular.
\end{cor}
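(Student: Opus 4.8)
The plan is to transport the problem into the scalar-factor setting of Section \ref{s2} by means of the associated operator $U^{\#}$, where the analogous formula has already been established, and then simply to read off the conclusion.

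First I would apply Proposition \ref{p:mUismUassoc}. By hypothesis $m:\Sigma\to\mathcal{L}(X,Y^{**})$ is a representing measure of $U\in\mathcal{L}(\mathcal{C}_{p}(\Omega,X),Y)$, and that proposition guarantees that the very same $m$ is the representing measure of the associated operator $U^{\#}\in\mathcal{L}(\mathcal{C}(\Omega),\mathcal{L}(X,Y))$ in the sense of Section \ref{s2}. In other words, the pair $(U^{\#},m)$ falls exactly under the hypotheses of the results proved there, with the generic operator $S$ replaced by $U^{\#}$.

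Second, I would invoke Corollary \ref{c:formula-m-mu} with $S:=U^{\#}$. Its first assertion compares the $\mathcal{L}(X,Y^{**})$-valued representing measure of $S$ with the classical $\mathcal{L}(X,Y)^{**}$-valued representing measure and produces $m(E)=PJ^{**}\mu(E)$ for every $E\in\Sigma$. Since the $\mu$ appearing there is precisely the classical representing measure of $U^{\#}$ named in the statement, this is exactly the formula claimed. The ``moreover'' part is obtained in the same manner: feeding the hypothesis that $U^{\#}$ is weakly compact into the ``moreover'' part of Corollary \ref{c:formula-m-mu} gives $PJ^{**}(A)=j_{Y}A$ on the range of $\mu$, hence $m(E)=j_{Y}\mu(E)\in\mathcal{L}(X,Y)$, the coincidence of $m$ and $\mu$, and the countable additivity and regularity of $m$ inherited from the Bartle--Dunford--Schwartz theory.

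No genuine difficulty is expected, since the statement is a formal consequence of the two cited results; the only point demanding a moment's care is the bookkeeping identification, namely verifying that the measure $\mu$ supplied on the Dinculeanu--Singer side (the classical representing measure of $U^{\#}$) is literally the same measure to which Corollary \ref{c:formula-m-mu} refers. This holds by construction, so all the analytic content remains packaged inside Proposition \ref{p:mUismUassoc} and Corollary \ref{c:formula-m-mu}.
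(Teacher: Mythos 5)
Your proposal is correct and is exactly the paper's own argument: the paper states the corollary as immediate from Proposition \ref{p:mUismUassoc} (which makes $m$ the representing measure of $U^{\#}$) followed by Corollary \ref{c:formula-m-mu} applied with $S:=U^{\#}$, including the ``moreover'' part. Nothing is missing.
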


\begin{rmk}\label{r:relation-m-mu}
Concerning the classical case of $\mathcal{C}(\Omega, X)=\mathcal{C}_{\infty}(\Omega,X)$, Corollary \ref{c:relation-m-m-mu} provides, for the first time in the literature, a general formula connecting the representing measure $m:\Sigma\rightarrow\mathcal{L}(X,Y^{**})$ of $U\in\mathcal{L}(\mathcal{C}(\Omega,X),Y)$ and the classical representing measure $\mu:\Sigma\rightarrow\mathcal{L}(X,Y)^{**}$ of $U^{\#}\in\mathcal{L}(\mathcal{C}(\Omega),\mathcal{L}(X,Y))$. In this sense, let us point out the partial result due to Dinculeanu (see \cite[Theorems 4 and 5]{Di2}, or, e.g., \cite[p. 388, Theorem 4]{D}): if $U\in\mathcal{L}(\mathcal{C}(\Omega,X),Y)$ and $U^{\#}\in\mathcal{L}(\mathcal{C}(\Omega),\mathcal{L}(X,Y))$ are \emph{dominated} operators, then $m$ takes its values in $\mathcal{L}(X,Y)$, and the measures $m$ and $\mu$ coincide.
\end{rmk}

Recall that a vector measure $m:\Sigma\rightarrow \mathcal{L}(X,Y^{**})$ is \emph{weakly regular} if $m_{y^{*}}:\Sigma\rightarrow X^{*}$ is regular for all $y^{*}\in Y^{*}$ (see, e.g., \cite[p. 181]{DU}). The following result extends the classical Dinculeanu--Singer theorem (see, e.g., \cite[p. 182]{DU}) in all its aspects (see Corollary \ref{c:Dinc-Sing-th} and the paragraph preceding it).

\begin{thm} \label{t:Dinc-Sing-general}
Let $X$ and $Y$ be Banach spaces and let $\Omega$ be a compact Hausdorff space. Let $1\leq p \leq \infty$.

$($\emph{a}$)$ Every operator $U\in\mathcal{L}(\mathcal{C}_{p}(\Omega,X),Y)$ has a unique representing measure $m:\Sigma\rightarrow \mathcal{L}(X,Y^{**})$. This measure coincides with the representing measure of its associated operator $U^{\#}\in\mathcal{L}(\mathcal{C}(\Omega),\mathcal{L}(X,Y))$.

$($\emph{b}$)$ Assume that $m:\Sigma\rightarrow \mathcal{L}(X,Y^{**})$ is a bounded vector measure. Then, there exists an operator $U\in\mathcal{L}(\mathcal{C}_{p}(\Omega,X),Y)$ such that $m$ is its representing measure if and only if for all $y^{*}\in Y^{*}$,
\[
I_{y^{*}}\in\mathcal{P}_{p'}(\mathcal{C}(\Omega), X^{*}),
\]
and the map $Y^{*}\rightarrow \mathcal{P}_{p'}(\mathcal{C}(\Omega), X^{*})=\mathcal{C}_{p}(\Omega, X)^{*}$, $y^{*}\mapsto I_{y^{*}}$, is linear, bounded, and weak*-to-weak* continuous.

In this case, $I_{y^{*}}= U^{*}y^{*}$ for all $y^{*}\in Y^{*}$, $U^{**}(\chi_{E}\otimes x) = m(E)x$ for all $E\in\Sigma$ and $x\in X$, $\|U\|=\|m\|_{p'}(\Omega)$, $\|U^{\#}\| = \|m\|(\Omega)$, and $m$ is a weakly regular measure if $p>1$.
\end{thm}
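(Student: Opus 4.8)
The plan is to obtain the theorem by assembling the machinery already developed, handling parts (a) and (b) in turn and isolating the weak regularity statement as the one genuinely new ingredient.

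For part (a), existence and the identification with the measure of the associated operator are immediate: by Theorem \ref{t:BDSt-S}(a) the operator $U^{\#}\in\mathcal{L}(\mathcal{C}(\Omega),\mathcal{L}(X,Y))$ has a unique representing measure $m:\Sigma\rightarrow\mathcal{L}(X,Y^{**})$, and Theorem \ref{t:mUassocismU} shows that this same $m$ is a representing measure of $U$. Uniqueness is exactly Corollary \ref{c:mofUisunique}, so $m$ is \emph{the} representing measure of $U$ and coincides with that of $U^{\#}$.

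For the ``only if'' direction of part (b), I would simply invoke Theorem \ref{t:mUassocismU}, which yields $I_{y^{*}}=U^{*}y^{*}\in\mathcal{P}_{p'}(\mathcal{C}(\Omega),X^{*})$ for every $y^{*}\in Y^{*}$; the map $y^{*}\mapsto I_{y^{*}}$ is then just $U^{*}$, and adjoints are automatically linear, bounded, and weak*-to-weak* continuous. For the ``if'' direction, I would start from the standard duality fact that a bounded, weak*-to-weak* continuous linear map $Y^{*}\rightarrow\mathcal{C}_{p}(\Omega,X)^{*}$ is the adjoint of a unique operator $U\in\mathcal{L}(\mathcal{C}_{p}(\Omega,X),Y)$, so that $U^{*}y^{*}=I_{y^{*}}$ for all $y^{*}$. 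Then
\[
\|m\|_{p'}(\Omega)=\sup_{y^{*}\in B_{Y^{*}}}\|I_{y^{*}}\|_{\mathcal{P}_{p'}}=\sup_{y^{*}\in B_{Y^{*}}}\|U^{*}y^{*}\|_{\mathcal{P}_{p'}}\leq\|U\|<\infty,
\]
so $m$ has bounded $p'$-semivariation and Theorem \ref{t:def-int-U} applies: the integration operator $\tilde{U}=\int_{\Omega}(\cdot)\,dm\in\mathcal{L}(\mathcal{C}_{p}(\Omega,X),Y^{**})$ satisfies $\tilde{U}^{*}y^{*}=I_{y^{*}}=U^{*}y^{*}$. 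Testing against $y^{*}\in Y^{*}$ gives $\langle\tilde{U}f,y^{*}\rangle=\langle f,I_{y^{*}}\rangle=\langle Uf,y^{*}\rangle$, whence $\tilde{U}f=j_{Y}(Uf)$; thus $\tilde{U}$ is $Y$-valued, $Uf=\int_{\Omega}f\,dm$, and $m$ is the representing measure of $U$.

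The ``in this case'' identities $I_{y^{*}}=U^{*}y^{*}$, $U^{**}(\chi_{E}\otimes x)=m(E)x$, and $\|U\|=\|m\|_{p'}(\Omega)$ all follow by bookkeeping from Theorems \ref{t:mUassocismU} and \ref{t:def-int-U}, while $\|U^{\#}\|=\|m\|(\Omega)$ comes from Proposition \ref{p:mUismUassoc} (so that $m$ also represents $U^{\#}$) together with Corollary \ref{c:normSandm}. The substantive step, which I expect to be the main obstacle, is the weak regularity of $m$ when $p>1$. Here $p'<\infty$, so each $I_{y^{*}}\in\mathcal{P}_{p'}(\mathcal{C}(\Omega),X^{*})$ is an absolutely $p'$-summing operator of finite index and is therefore weakly compact (every absolutely $q$-summing operator with $q<\infty$ is weakly compact; for $1<p'<\infty$ one may also note the factorization through a reflexive subspace of an $L_{p'}$-space). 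Since $m_{y^{*}}$ is the representing measure of $I_{y^{*}}$, the Bartle--Dunford--Schwartz theory guarantees that the representing measure of a weakly compact operator on $\mathcal{C}(\Omega)$ is regular (as already used in Corollary \ref{c:formula-m-mu}); hence every $m_{y^{*}}$ is regular, i.e., $m$ is weakly regular. The failure of weak compactness of $I_{y^{*}}$ when $p=1$ (where $p'=\infty$ and $I_{y^{*}}$ is merely bounded) is precisely what allows regularity to break down in that case.
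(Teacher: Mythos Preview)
Your proof is correct and follows essentially the same route as the paper: part (a) via Theorem~\ref{t:BDSt-S}, Theorem~\ref{t:mUassocismU}, and Corollary~\ref{c:mofUisunique}; the ``only if'' of (b) and the ``in this case'' identities via Theorem~\ref{t:mUassocismU} and Corollary~\ref{c:normSandm}; and weak regularity via weak compactness of absolutely $p'$-summing operators for $p'<\infty$. The only minor variation is in the ``if'' direction of (b): you first check $\|m\|_{p'}(\Omega)<\infty$ and then identify $U$ with the integration operator from Theorem~\ref{t:def-int-U} by comparing adjoints on $Y^{*}$, whereas the paper instead shows directly (via $\langle y^{*},(\int_{\Omega}\varphi\,dm)x\rangle=\langle x,I_{y^{*}}\varphi\rangle=\langle (U^{\#}\varphi)x,y^{*}\rangle$) that $m$ represents $U^{\#}$ and then appeals to Theorem~\ref{t:mUassocismU}; the ingredients and outcome are the same.
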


\begin{proof}
(a) A representing measure for an operator $S\in\mathcal{L}(\mathcal{C}(\Omega),\mathcal{L}(X,Y))$ (the associated operator $U^{\#}$ is of this type) always exists (see Remark \ref{r:constructionofm}). Theorem \ref{t:mUassocismU} and Proposition \ref{p:mUismUassoc} show that the representing measures of $U$ and $U^{\#}$ coincide. The measure is unique by Corollary \ref{c:mofUisunique}.

(b) Let $m:\Sigma\rightarrow \mathcal{L}(X,Y^{**})$ be the representing measure of $U\in\mathcal{L}(\mathcal{C}_{p}(\Omega,X),Y)$. Then, by (a) and Theorem \ref{t:mUassocismU}, $m$ is also the representing measure of $U^{\#}\in\mathcal{L}(\mathcal{C}(\Omega),\mathcal{L}(X,Y))$,  $I_{y^{*}}=U^{*}y^{*}\in\mathcal{P}_{p'}(\mathcal{C}(\Omega),X^{*})$ for all $y^{*}\in Y^{*}$, $U^{**}(\chi_{E}\otimes x) = m(E)x$ for all $E\in\Sigma$ and $x\in X$, and $\|U\|=\|m\|_{p'}(\Omega)$. In particular, $U^{*}: y^{*}\mapsto I_{y^{*}}$ is linear, bounded, and weak*-to-weak* continuous. This shows the ``only if'' part.

For the ``if'' part, let $m:\Sigma\rightarrow \mathcal{L}(X,Y^{**})$ be a bounded vector measure. Denote by $V$ the map given by the assumption, i.e.,
\[
V:Y^{*}\rightarrow \mathcal{P}_{p'}(\mathcal{C}(\Omega), X^{*})=\mathcal{C}_{p}(\Omega, X)^{*},\ \ y^{*}\mapsto I_{y^{*}}.
\]
Since $V$ is weak*-to-weak* continuous, there exists an operator $U\in\mathcal{L}(\mathcal{C}_{p}(\Omega,X),Y)$ such that $U^{*}=V$.

We only need to show that
\[
\int_{\Omega}\varphi\,dm = U^{\#}\varphi,\ \ \varphi\in\mathcal{C}(\Omega),
\]
because then $m:\Sigma\rightarrow\mathcal{L}(X,Y^{**})$ is the representing measure of $U^{\#}\in\mathcal{L}(\mathcal{C}(\Omega),\mathcal{L}(X,Y))$, hence also the representing measure of $U$ (see Theorem \ref{t:mUassocismU}).

For every $\varphi\in\mathcal{C}(\Omega)$, $x\in X$, and $y^{*}\in Y^{*}$, using \eqref{eq:intmystar}, we have that
\[
\langle y^{*}, \Big(\int_{\Omega}\varphi\,dm\Big)x \rangle = \langle \int_{\Omega}\varphi\,dm ,x\otimes y^{*} \rangle = \langle x, \int_{\Omega}\varphi\,dm_{y^{*}} \rangle
\]
\[
= \langle x, I_{y^{*}}\varphi \rangle = \langle x, (Vy^{*})\varphi \rangle = \langle \varphi x, Vy^{*} \rangle \]
\[
= \langle \varphi x, U^{*}y^{*} \rangle = \langle U(\varphi x), y^{*} \rangle = \langle (U^{\#}\varphi)x,y^{*} \rangle.
\]
This proves that $m$ is the representing measure of $U^{\#}$, as desired.

For the ``in this case'' part, the first three claims were already observed above. Since $m$ is also the representing measure of $U^{\#}\in\mathcal{L}(\mathcal{C}(\Omega),\mathcal{L}(X,Y))$, by Corollary \ref{c:normSandm}, we have that $\|U^{\#}\|=\|m\|(\Omega)$. Concerning the remaining claim about the weak regularity, recall that $m_{y^{*}}$ is the representing measure of $I_{y^{*}}\in\mathcal{P}_{p'}(\mathcal{C}(\Omega),X^{*})$ for every $y^{*}\in Y^{*}$. If $p>1$, then $p'<\infty$, and $I_{y^{*}}$ is a weakly compact operator (see, e.g., \cite[p. 50, Theorem 2.17]{DJT}). Therefore, $m_{y^{*}}$ is regular (see, e.g., \cite[p. 159, Corollary 14]{DU}) for all $y^{*}\in Y^{*}$.
\end{proof}

The next example shows that, for $p=1$, the measure $m$ in Theorem \ref{t:Dinc-Sing-general} is not weakly regular in general.

\begin{exam} \label{ex:regularityofm}
Let $X$ be a Banach space and let $\Omega$ be a compact Hausdorff space such that there exists a non-weakly compact operator $S\in\mathcal{L}(\mathcal{C}(\Omega),X^{*})$. Then, there exists an operator $U\in\mathcal{L}(\mathcal{C}_{1}(\Omega,X),\mathds{K})$ such that its representing measure $m:\Sigma\rightarrow\mathcal{L}(X,\mathds{K})=X^{*}$ is not weakly regular.

\begin{proof}
Let $S\in\mathcal{L}(\mathcal{C}(\Omega),X^{*})$ be a non-weakly compact operator. Then its representing measure $m:\Sigma\rightarrow X^{***}$ is not regular (see, e.g., \cite[p. 159, Corollary 14]{DU}).

Since, as is well known, $\mathcal{L}(\mathcal{C}(\Omega),X^{*})$ is canonically isometrically isomorphic to $(\mathcal{C}(\Omega)\otimes_{\pi}X)^{*}=(\mathcal{C}(\Omega)\hat{\otimes}_{\pi}X)^{*}$ and $\pi = d_{1}$, there exists a unique operator $U\in\mathcal{L}(\mathcal{C}(\Omega)\hat{\otimes}_{d_{1}}X,\mathds{K}) = \mathcal{L}(\mathcal{C}_{1}(\Omega,X),\mathds{K})$ such that $S=U^{\#}$. Then $m$ is the representing measure of $U$ because $m$ is the representing measure of $U^{\#}=S$ (see Theorem \ref{t:mUassocismU}). We know that $U^{*}\in\mathcal{L}(\mathds{K}^{*},\mathcal{C}_{1}(\Omega, X)^{*}) = \mathcal{L}(\mathds{K},\mathcal{L}(\mathcal{C}(\Omega), X^{*}))$ and, by Theorem \ref{t:mUassocismU}, $I_{1}=U^{*}1$. On the other hand, for every $\varphi\in\mathcal{C}(\Omega)$ and $x\in X$, we have
\[
((U^{*}1)\varphi)x = \langle \varphi x, U^{*}1\rangle = 1\,U(\varphi x) = (U^{\#}\varphi)x = (S\varphi)x,
\]
meaning that $U^{*}1=S$. Therefore, $I_{1} = S$, and its representing measure, which is $m$, is not regular. Hence, the representing measure of $U$ is not weakly regular.
\end{proof}
\end{exam}

Since $\mathcal{C}_{\infty}(\Omega,X) = \mathcal{C}(\Omega,X)$ and, hence, $\mathcal{P}_{1}(\mathcal{C}(\Omega), X^{*})=\mathcal{C}_{\infty}(\Omega,X)^{*}=\mathcal{C}(\Omega, X)^{*}$, Theorem \ref{t:Dinc-Sing-general} immediately yields the classical Dinculeanu--Singer theorem. Notice that, for every $y^{*}\in Y^{*}$, we can identify $I_{y^{*}}\in\mathcal{P}_{1}(\mathcal{C}(\Omega), X^{*})=\mathcal{C}(\Omega, X)^{*}$ with its (unique) representing measure $m_{y^{*}}:\Sigma\rightarrow X^{*}$. Let us stress that below we do not need to know about the Riesz--Singer representation of $\mathcal{C}(\Omega,X)^{*}$ as $r\,ca\,bv(\Sigma,X^{*})$. However, we get the regularity of the measures $m_{y^{*}}$ from our general setting. We also obtain the countable additivity of $m_{y^{*}}$ thanks to the Bartle--Dunford--Schwartz theorem (because $I_{y^{*}}$ are weakly compact). Moreover, the measures $m_{y^{*}}$ are of bounded variation (because they are the representing measures of absolutely summing operators $I_{y^{*}}$ (see, e.g., \cite[p. 162, Theorem 3]{DU})). So that, in the special case when $Y=\mathds{K}$, also the Riesz--Singer theorem is contained in Corollary \ref{c:Dinc-Sing-th} below (recall that, for a vector measure $m:\Sigma\rightarrow X^{*}$, one has $\|m\|_{1}(\Omega) = |m|(\Omega)$, the variation of $m$ on $\Omega$ (see, e.g., \cite[p. 54, Proposition 4]{D})).

\begin{cor} [cf.~{the Dinculeanu--Singer theorem, e.g., \cite[p. 182]{DU}}] \label{c:Dinc-Sing-th}
Let $X$ and $Y$ be Banach spaces and let $\Omega$ be a compact Hausdorff space.

$($\emph{a}$)$ Every operator $U\in\mathcal{L}(\mathcal{C}(\Omega, X),Y)$ has a unique representing measure $m:\Sigma\rightarrow\mathcal{L}(X,Y^{**})$. This measure coincides with the representing measure of its associated operator $U^{\#}\in\mathcal{L}(\mathcal{C}(\Omega),\mathcal{L}(X,Y))$.

$($\emph{b}$)$ Assume that $m:\Sigma\rightarrow \mathcal{L}(X,Y^{**})$ is a bounded vector measure. Then, there exists an operator $U\in\mathcal{L}(\mathcal{C}(\Omega,X),Y)$ such that $m$ is its representing measure if and only if for all $y^{*}\in Y^{*}$,
\[
m_{y^{*}}\in\mathcal{C}(\Omega, X)^{*},
\]
and the map $Y^{*}\rightarrow \mathcal{C}(\Omega, X)^{*}$, $y^{*}\mapsto m_{y^{*}}$, is linear, bounded, and weak*-to-weak* continuous.

In this case, $m_{y^{*}}:\Sigma\rightarrow X^{*}$ is countably additive and of bounded variation, $m_{y^{*}}= U^{*}y^{*}$ for all $y^{*}\in Y^{*}$, $\|U\|=\|m\|_{1}(\Omega)$, and $m$ is weakly regular.
\end{cor}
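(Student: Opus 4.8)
The plan is to specialize Theorem~\ref{t:Dinc-Sing-general} to the case $p=\infty$, so that $p'=1$. This is legitimate because $\mathcal{C}(\Omega,X)=\mathcal{C}_{\infty}(\Omega,X)$ and, as recalled in the paragraph preceding the statement, $\mathcal{P}_{1}(\mathcal{C}(\Omega),X^{*})=\mathcal{C}(\Omega,X)^{*}$ as Banach spaces. Under this identification, for each $y^{*}\in Y^{*}$ the absolutely summing operator $I_{y^{*}}\in\mathcal{P}_{1}(\mathcal{C}(\Omega),X^{*})$ is identified with its (unique) representing measure $m_{y^{*}}:\Sigma\rightarrow X^{*}$, and the $p'$-semivariation $\|m\|_{p'}(\Omega)=\|m\|_{1}(\Omega)$ is the Gowurin--Dinculeanu semivariation (Example~\ref{ex:q-semivar}). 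With these two identifications in place, parts (a) and (b) of the corollary become direct transcriptions of the corresponding parts of Theorem~\ref{t:Dinc-Sing-general}.

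For part (a), I would simply invoke Theorem~\ref{t:Dinc-Sing-general}(a) with $p=\infty$: every $U\in\mathcal{L}(\mathcal{C}(\Omega,X),Y)$ has a unique representing measure $m:\Sigma\rightarrow\mathcal{L}(X,Y^{**})$, coinciding with the representing measure of $U^{\#}$. For part (b), the hypothesis $I_{y^{*}}\in\mathcal{P}_{p'}(\mathcal{C}(\Omega),X^{*})$ reads $I_{y^{*}}\in\mathcal{P}_{1}(\mathcal{C}(\Omega),X^{*})=\mathcal{C}(\Omega,X)^{*}$, which, after identifying $I_{y^{*}}$ with $m_{y^{*}}$, is precisely $m_{y^{*}}\in\mathcal{C}(\Omega,X)^{*}$; and the weak*-to-weak* continuity of $y^{*}\mapsto I_{y^{*}}$ transports to that of $y^{*}\mapsto m_{y^{*}}$. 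The equalities $m_{y^{*}}=U^{*}y^{*}$ and $\|U\|=\|m\|_{1}(\Omega)$, together with weak regularity (which applies here since $p=\infty>1$), are exactly the ``in this case'' conclusions of Theorem~\ref{t:Dinc-Sing-general}.

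The only assertions not obtained by a literal specialization are the countable additivity and bounded variation of each $m_{y^{*}}$, and these follow from standard facts. Since $p'=1<\infty$, each $I_{y^{*}}$ is absolutely summing and hence weakly compact (see, e.g., \cite[p.~50, Theorem~2.17]{DJT}); therefore, by the Bartle--Dunford--Schwartz theorem, its representing measure $m_{y^{*}}$ is countably additive. Bounded variation follows from the classical characterization that the representing measure of an absolutely summing operator on $\mathcal{C}(\Omega)$ is of bounded variation, with $|m_{y^{*}}|(\Omega)=\|I_{y^{*}}\|_{\mathcal{P}_{1}}$ (see, e.g., \cite[p.~162, Theorem~3]{DU}).

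I do not expect a genuine obstacle here: the content is entirely inherited from Theorem~\ref{t:Dinc-Sing-general}. The principal care required is the bookkeeping of the two identifications $\mathcal{P}_{1}(\mathcal{C}(\Omega),X^{*})=\mathcal{C}(\Omega,X)^{*}$ and $I_{y^{*}}\leftrightarrow m_{y^{*}}$, ensuring that the continuity hypothesis in (b) and all of the stated conclusions transport correctly across them, and that one does not inadvertently invoke the Riesz--Singer description of $\mathcal{C}(\Omega,X)^{*}$, which the present approach deliberately avoids.
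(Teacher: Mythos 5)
Your proposal is correct and follows essentially the same route as the paper: the paper also obtains the corollary by specializing Theorem~\ref{t:Dinc-Sing-general} to $p=\infty$ via the identifications $\mathcal{P}_{1}(\mathcal{C}(\Omega),X^{*})=\mathcal{C}(\Omega,X)^{*}$ and $I_{y^{*}}\leftrightarrow m_{y^{*}}$, deliberately avoiding the Riesz--Singer representation. The supplementary facts are handled identically as well: countable additivity from the weak compactness of the absolutely summing operators $I_{y^{*}}$ together with the Bartle--Dunford--Schwartz theorem, and bounded variation from \cite[p.~162, Theorem~3]{DU}.
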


\begin{rmk} \label{r:proofDinculeanu--Singerth}
As we mentioned in the Introduction, in our general treatise, we did not follow any of the traditional proofs of the Dinculeanu--Singer theorem. The traditional proofs are of two types, although both extend methods of the classical proof of the Bartle--Dunford--Schwartz theorem in \cite[Theorem 3.1]{BDS} or \cite[p. 492, Theorem 2]{DS}. The proofs, e.g., by Batt and K\"{o}nig \cite{BK}, Dinculeanu \cite{Di}, \cite[pp. 398--399, Theorem 9]{D}, Foia\c{s} and Singer \cite{FS}, Swong \cite{Swong}, Tucker \cite{Tucker}, essentially rely on the Riesz--Singer representation theorem. The proofs, e.g., by Brooks and Lewis \cite{BrLe}, and Diestel and Uhl \cite[pp. 181--182]{DU} use ``the device of embedding isometrically the simple functions in $\mathcal{C}(\Omega,X)^{**}$ and thus reducing the problem to utilizing the representing theorem for operators $L\in\mathcal{L}(\mathcal{B}(\Sigma, X),Y)$, which can be easily established''. We quoted Brooks and Lewis \cite[p. 139]{BrLe} here; the mentioned representing theorem can be found in Dinculeanu's book \cite[p. 145, Theorem 1]{D}.
\end{rmk}

\begin{rmk} \label{r:about hatU 2}
Batt and Berg \cite{BB} introduced the notion of the \emph{weak extension} of an operator $U\in\mathcal{L}(\mathcal{C}(\Omega,X),Y)$, which is precisely the integration operator $\hat{U}\in\mathcal{L}(\mathcal{B}(\Sigma,X),Y^{**})$ with respect to the representing measure $m:\Sigma\rightarrow \mathcal{L}(X,Y^{**})$ of $U$. They proved that $\|\hat{U}\|=\|U\|$, $\hat{U}(\chi_{E}x) = m(E)x$ for all $E\in\Sigma$ and $x\in X$ (see \cite[Theorem 1]{BB}), and that $\textrm{ran}\,m\subset \mathcal{L}(X,Y)$ if and only if $\textrm{ran}\,\hat{U}\subset Y$ (see \cite[Theorem 2]{BB}). However, as our Theorems \ref{t:def-int-dp} and \ref{t:def-int-U} clearly show, these are general properties of any integration operator $\hat{U}\in\mathcal{L}(\mathcal{B}(\Sigma,X),Y^{**})$ and its restriction $U:=\hat{U}|_{\mathcal{C}(\Omega,X)}$. Moreover, even $\|\hat{U}\|=\|U\|=\|m\|_{1}(\Omega)$ and (by \eqref{eq:int-dp} and the ``moreover'' part of Theorem \ref{t:def-int-U}) $\hat{U}(\chi_{E}x) = m(E)x = U^{**}(\chi_{E}\otimes x)$ for all $E\in\Sigma$ and $x\in X$ in this general case.
\end{rmk}

\section{\texorpdfstring{Complements to the Dinculeanu--Singer theorem}{Complements to the Dinculeanu--Singer theorem}}\label{s5}

Let $X$ and $Y$ be Banach spaces and let $\Omega$ be a compact Hausdorff space. Let $1\leq p\leq\infty$. Let $S\in\mathcal{L}(\mathcal{C}(\Omega),\mathcal{L}(X,Y))$. In \cite{MOP2}, we studied the problem when does there exist an operator $U\in\mathcal{L}(\mathcal{C}_{p}(\Omega,X),Y)$ such that $S=U^{\#}$? In this section, we shall apply some result from \cite{MOP2} to prove some qualitative complements to Theorem \ref{t:Dinc-Sing-general}, the extension of the Dinculeanu--Singer theorem.

The idea behind the results below is as follows: the existence of an operator $U\in\mathcal{L}(\mathcal{C}_{p}(\Omega,X),Y)$ such that a given vector measure $m:\Sigma\rightarrow\mathcal{L}(X,Y^{**})$ is its representing measure is equivalent to the existence of an operator $S\in\mathcal{L}(\mathcal{C}(\Omega),\mathcal{L}(X,Y))$ such that $m$ is the representing measure of $S$ and such that $S=U^{\#}$. Notice that we shall not need Theorem \ref{t:Dinc-Sing-general} at all. Besides \cite{MOP2}, we shall rely on Theorem \ref{t:BDSt-S}, our extension of the Bartle--Dunford--Schwartz theorem, together with Theorem \ref{t:mUassocismU} and Proposition \ref{p:mUismUassoc}.

The next theorem also contributes to the classical Dinculeanu--Singer case when $p=\infty$.

\begin{thm} \label{t:Din-Sing and MOP2-1}
Let $X$ and $Y$ be Banach spaces and let $\Omega$ be a compact Hausdorff space. Let $1\leq p \leq \infty$. Assume that $m:\Sigma\rightarrow \mathcal{L}(X,Y^{**})$ is a bounded vector measure. Then, there exists an operator $U\in\mathcal{L}(\mathcal{C}_{p}(\Omega,X),Y)$ such that $m$ is its representing measure if and only if

$($\emph{i}$)$ for all $x\in X$,
\[
\langle y^{*},m_{x}(\cdot)\rangle \in \mathcal{C}(\Omega)^{*},\ \ y^{*}\in Y^{*},
\]
and the map $Y^{*}\rightarrow \mathcal{C}(\Omega)^{*}$, $y^{*}\mapsto \langle y^{*},m_{x}(\cdot)\rangle$, is linear, bounded and weak*-to-weak* continuous, and

$($\emph{ii}$)$ one of the following equivalent conditions holds:

$($\emph{a}$)$ there exists a constant $c>0$ such that, for all finite systems $(x_{i})_{i=1}^{n}\subset X$ and $(\varphi_{i})_{i=1}^{n}\subset \mathcal{C}(\Omega)$,
\[
\Big\|\Big(\int_{\Omega}\varphi_{i}\,dm_{x_{i}}\Big)\Big\|_{p'}^{w} \leq c\,\|(x_{i})\|_{\infty}\|(\varphi_{i})\|_{p'}^{w};
\]

$($\emph{b}$)$ there exists a constant $c>0$ such that, for all $(x_{i})\in \ell_{\infty}(X)$ and $(\varphi_{i})\in \ell_{p'}^{w}(\mathcal{C}(\Omega))$, and for all $n\in\mathds{N}$,
\[
\Big\|\Big(\int_{\Omega}\varphi_{i}\,dm_{x_{i}}\Big)_{i=n}^{\infty}\Big\|_{p'}^{w} \leq c\,\|(x_{i})_{i=n}^{\infty}\|_{\infty}\|(\varphi_{i})_{i=n}^{\infty}\|_{p'}^{w};
\]

$($\emph{c}$)$ if $(x_{i})\in \ell_{\infty}(X)$ and $(\varphi_{i})\in \ell_{p'}^{w}(\mathcal{C}(\Omega))$, then $(\int_{\Omega}\varphi_{i}\,dm_{x_{i}})\in\ell_{p'}^{w}(Y)$;

$($\emph{d}$)$ if $(x_{i})\in c_{0}(X)$ and $(\varphi_{i})\in \ell_{p'}^{w}(\mathcal{C}(\Omega))$ $($or $(x_{i})\in \ell_{\infty}(X)$ and $(\varphi_{i})\in \ell_{p'}^{u}(\mathcal{C}(\Omega))$$)$, then $(\int_{\Omega}\varphi_{i}\,dm_{x_{i}})\in\ell_{p'}^{u}(Y)$.
\end{thm}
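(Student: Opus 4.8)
The plan is to exploit the correspondence $U\leftrightarrow U^{\#}$ so as to split the statement into two pieces already available in the excerpt. By Proposition \ref{p:mUismUassoc} and Theorem \ref{t:mUassocismU}, a bounded vector measure $m$ is the representing measure of some $U\in\mathcal{L}(\mathcal{C}_{p}(\Omega,X),Y)$ precisely when it is the representing measure of some $S\in\mathcal{L}(\mathcal{C}(\Omega),\mathcal{L}(X,Y))$ that, in addition, has the form $S=U^{\#}$. Now Theorem \ref{t:BDSt-S}(b) says that condition (i) is \emph{exactly} the requirement that $m$ be the representing measure of some such $S$, and in that case $S$ is forced to be the integration operator $S\varphi=\int_{\Omega}\varphi\,dm$, $\varphi\in\mathcal{C}(\Omega)$. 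Thus the theorem reduces to showing that, under (i), an operator $U$ with $U^{\#}=S$ exists if and only if (ii) holds.

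The pivot is the equivalence of condition (a) with the finiteness of $\|m\|_{p'}(\Omega)$. To obtain it I would start from the defining formula $\|m\|_{p'}(\Omega)=\sup_{y^{*}\in B_{Y^{*}}}\|I_{y^{*}}\|_{\mathcal{P}_{p'}}$ and, recalling from \eqref{eq:intmystar} that $\langle x,I_{y^{*}}\varphi\rangle=\langle y^{*},(\int_{\Omega}\varphi\,dm)x\rangle=\langle y^{*},\int_{\Omega}\varphi\,dm_{x}\rangle$, expand the $p'$-summing norm over finite systems $(\varphi_{i})$. Writing $\|I_{y^{*}}\varphi_{i}\|_{X^{*}}=\sup_{x_{i}\in B_{X}}|\langle x_{i},I_{y^{*}}\varphi_{i}\rangle|$ and interchanging the suprema over $y^{*}\in B_{Y^{*}}$ and over $(x_{i})$ with $\|(x_{i})\|_{\infty}\le 1$, the inner supremum over $y^{*}$ collapses to the weak $\ell_{p'}$-norm $\|(\int_{\Omega}\varphi_{i}\,dm_{x_{i}})\|_{p'}^{w}$ (here $B_{Y^{*}}$ norms $Y^{**}$, so the computation is valid even before we know the integrals lie in $Y$), and homogeneity in $(x_{i})$ turns the finiteness of the supremum into precisely the estimate (a). With this in hand the existence of $U$ is immediate: if $\|m\|_{p'}(\Omega)<\infty$, Theorem \ref{t:def-int-U} yields an integration operator in $\mathcal{L}(\mathcal{C}_{p}(\Omega,X),Y^{**})$ which on the dense subspace $\mathcal{C}(\Omega)\otimes X$ sends $\varphi\otimes x$ to $(\int_{\Omega}\varphi\,dm)x=(S\varphi)x\in Y$ (using (i)); by continuity it maps into $Y$, so $U\in\mathcal{L}(\mathcal{C}_{p}(\Omega,X),Y)$, $U^{\#}=S$, and $m$ is the representing measure of $U$. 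Conversely, if $m$ represents $U$, then $\|m\|_{p'}(\Omega)=\|U\|<\infty$ by Theorem \ref{t:mUassocismU}.

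It remains to establish the internal equivalences (a)$\Leftrightarrow$(b)$\Leftrightarrow$(c)$\Leftrightarrow$(d); these are summability equivalences of the type proved in \cite{MOP2}, and I would organize them around (a). The implication (b)$\Rightarrow$(a) is obtained by extending a finite system by zeros and applying (b) with $n=1$, while (a)$\Rightarrow$(b) follows by applying (a) to the truncations $i=n,\dots,N$ and letting $N\to\infty$, since the weak $\ell_{p'}$-norm of a tail is the supremum of those of its finite truncations. Next, (b) with $n=1$ shows that $(\int_{\Omega}\varphi_{i}\,dm_{x_{i}})\in\ell_{p'}^{w}(Y)$, which is (c); and the tail form of (b) gives (d), because when $(x_{i})\in c_{0}(X)$ (resp. $(\varphi_{i})\in\ell_{p'}^{u}(\mathcal{C}(\Omega))$) the factor $\|(x_{i})_{i=n}^{\infty}\|_{\infty}$ (resp. $\|(\varphi_{i})_{i=n}^{\infty}\|_{p'}^{w}$) tends to $0$, so the tails of $(\int_{\Omega}\varphi_{i}\,dm_{x_{i}})$ vanish in weak $\ell_{p'}$-norm, i.e. the sequence lies in $\ell_{p'}^{u}(Y)$. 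Finally, both (c)$\Rightarrow$(a) and (d)$\Rightarrow$(a) run the same way: the hypothesis asserts that the bilinear map $((x_{i}),(\varphi_{i}))\mapsto(\int_{\Omega}\varphi_{i}\,dm_{x_{i}})$ is well defined into $\ell_{p'}^{w}(Y)$ (resp. $\ell_{p'}^{u}(Y)$) on the relevant domain, a closed-graph argument (each coordinate $\int_{\Omega}\varphi_{i}\,dm_{x_{i}}$ depends continuously on $(\varphi_{i})$) upgrades this to a uniform bound, and restricting that bound to finitely supported sequences — which belong to every domain in sight — yields (a).

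The step I expect to require the most care is the pivot, the equivalence of condition (a) with the finiteness of $\|m\|_{p'}(\Omega)$: one must justify the interchange of the suprema over $B_{Y^{*}}$ and over $B_{\ell_{\infty}(X)}$ and recognise the inner supremum as the weak $\ell_{p'}$-norm, while keeping track of the fact that, before (i) is invoked, the integrals $\int_{\Omega}\varphi_{i}\,dm_{x_{i}}$ live only in $Y^{**}$. It is exactly condition (i) (through Lemma \ref{l:mxSx}, which identifies $m_{x}$ as the representing measure of $S_{x}$) that places these integrals in $Y$ and thereby makes the assertions (c) and (d) about $\ell_{p'}^{w}(Y)$ and $\ell_{p'}^{u}(Y)$ meaningful. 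Once this is settled, the remaining work — the reduction of the first paragraph and the summability equivalences of the third — is conceptual or routine, so the genuine content lies in the interplay between condition (i) and the $p'$-semivariation encoded in (ii).
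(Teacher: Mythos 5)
Your proof is correct, and its top-level skeleton coincides with the paper's: both reduce the problem via the correspondence $U\leftrightarrow U^{\#}$, using Theorem \ref{t:BDSt-S}(b) to identify condition (i) with the existence of $S\in\mathcal{L}(\mathcal{C}(\Omega),\mathcal{L}(X,Y))$ represented by $m$, and Theorem \ref{t:mUassocismU} together with Proposition \ref{p:mUismUassoc} to glue the two levels. Where you genuinely diverge is the core step linking condition (ii) to the existence of $U$ with $U^{\#}=S$: the paper disposes of this in one line by citing \cite[Corollary 3.4]{MOP2} (an external result of the authors, stated for operators $S$ in terms of $(S\varphi_i)x_i$, which already contains the mutual equivalence of (a)--(d)), whereas you prove it inside the present paper. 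Your pivot --- that under (i) the best constant in (a) is exactly $\|m\|_{p'}(\Omega)$, obtained by expanding $\|I_{y^{*}}\|_{\mathcal{P}_{p'}}$, writing $\|I_{y^{*}}\varphi_i\|=\sup_{x_i\in B_X}|\langle x_i,I_{y^{*}}\varphi_i\rangle|$ and interchanging the suprema over $B_{Y^{*}}$ and over $B_X^{\,n}$ --- is a valid computation, and it feeds directly into Theorem \ref{t:def-int-U}: boundedness of the $p'$-semivariation yields the integration operator into $Y^{**}$, and condition (i) forces its values on the dense subspace $\mathcal{C}(\Omega)\otimes X$ into the closed subspace $Y$, so the operator $U$ exists; the converse direction is immediate from Definition \ref{d:representingmU} and Theorem \ref{t:mUassocismU}. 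Your hand-made equivalences (a)$\Leftrightarrow$(b)$\Leftrightarrow$(c)$\Leftrightarrow$(d) (truncation and tail estimates for (a)$\Leftrightarrow$(b) and (b)$\Rightarrow$(c),(d); closed-graph plus separate-to-joint continuity of the bilinear map, then restriction to finitely supported sequences, for (c),(d)$\Rightarrow$(a)) are standard and correct, and you rightly flag that (i) is what places the integrals in $Y$ so that (c) and (d) are meaningful. The trade-off: the paper's proof is shorter and presents the theorem as a direct application of \cite{MOP2}, while yours is self-contained, in effect reproves the needed case of \cite[Corollary 3.4]{MOP2}, and yields the extra quantitative information that the optimal $c$ in (a) equals $\|m\|_{p'}(\Omega)$ (hence equals $\|U\|$).
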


\begin{proof}
We are going to use the following fact. Assume that $m$ is the representing measure of an operator $S\in\mathcal{L}(\mathcal{C}(\Omega),\mathcal{L}(X,Y))$. Since
\[
(S\varphi)x = (\int_{\Omega}\varphi\,dm)x = \int_{\Omega}\varphi\,dm_{x} \ \ \mbox{ for all } \varphi\in\mathcal{C}(\Omega) \mbox{ and } x\in X,
\]
by \cite[Corollary 3.4]{MOP2}, every condition included in (ii) is equivalent to the existence of an operator $U\in\mathcal{L}(\mathcal{C}_{p}(\Omega,X),Y)$ such that $U^{\#}=S$.

For the ``only if'' part, let $U\in\mathcal{L}(\mathcal{C}_{p}(\Omega,X),Y)$ be such that $m$ is its representing measure. By Proposition \ref{p:mUismUassoc}, $m$ is also the representing measure of its associated operator $U^{\#}\in\mathcal{L}(\mathcal{C}(\Omega),\mathcal{L}(X,Y))$, and, by the above fact, (ii) holds; condition (i) is immediate from Theorem \ref{t:BDSt-S}.

For the ``if'' part, condition (i) implies that there exists an operator $S\in\mathcal{L}(\mathcal{C}(\Omega),\mathcal{L}(X,Y))$ such that $m$ is its representing measure (see Theorem \ref{t:BDSt-S}). And, by the above fact, condition (ii) implies that there exists an operator $U\in\mathcal{L}(\mathcal{C}_{p}(\Omega,X),Y)$ such that $U^{\#} = S$. Then, by Theorem \ref{t:mUassocismU}, $m$ is also the representing measure of $U$.
\end{proof}

In the next theorem, we use \cite[Corollary 2.5]{MOP2}, which asserts that, for every operator $S\in\mathcal{L}(\mathcal{C}(\Omega), \mathcal{L}(X,Y))$, there exists an operator $U\in\mathcal{L}(\mathcal{C}_{1}(\Omega,X),Y)$ such that $U^{\#}=S$.

\begin{thm} \label{t:Din-Sing and MOP2-2}
Let $X$ and $Y$ be Banach spaces and let $\Omega$ be a compact Hausdorff space. Assume that $m:\Sigma\rightarrow \mathcal{L}(X,Y^{**})$ is a bounded vector measure. Then, there exists an operator $U\in\mathcal{L}(\mathcal{C}_{1}(\Omega,X),Y)$ such that $m$ is its representing measure if and only if there exists an operator $S\in\mathcal{L}(\mathcal{C}(\Omega), \mathcal{L}(X,Y))$ such that $m$ is its representing measure.
\end{thm}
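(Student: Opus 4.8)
The plan is to prove the two implications of the biconditional separately, relying on the correspondences between $U$ and its associated operator $U^{\#}$ established earlier, together with the cited fact \cite[Corollary 2.5]{MOP2}.

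First I would dispose of the ``only if'' direction, which in fact holds for every $p$ and is purely formal. Assuming $U\in\mathcal{L}(\mathcal{C}_{1}(\Omega,X),Y)$ has representing measure $m$, Proposition \ref{p:mUismUassoc} says at once that $m$ is the representing measure of the associated operator $U^{\#}\in\mathcal{L}(\mathcal{C}(\Omega),\mathcal{L}(X,Y))$. Hence one simply takes $S:=U^{\#}$, and this direction is complete with no computation.

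Next I would treat the ``if'' direction, which is where the restriction to $p=1$ enters. Starting from an operator $S\in\mathcal{L}(\mathcal{C}(\Omega),\mathcal{L}(X,Y))$ whose representing measure is $m$, I would invoke \cite[Corollary 2.5]{MOP2} to produce an operator $U\in\mathcal{L}(\mathcal{C}_{1}(\Omega,X),Y)$ with $U^{\#}=S$. Then $m$ is the representing measure of $U^{\#}$, and Theorem \ref{t:mUassocismU} (applied with $p=1$) yields that $m$ is a representing measure of $U$. No bounded-semivariation hypothesis beyond the standing one is needed: for $p=1$ the relevant requirement is that the $p'=\infty$-semivariation be finite, and $\|m\|_{\infty}(\Omega)=\|m\|(\Omega)<\infty$ holds automatically for a bounded vector measure by Example \ref{ex:q-semivar}.

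The conceptual heart of the theorem---and the only step that is not formal bookkeeping---is the appeal to \cite[Corollary 2.5]{MOP2}: the assertion that \emph{every} $S\in\mathcal{L}(\mathcal{C}(\Omega),\mathcal{L}(X,Y))$ has the form $U^{\#}$ for some $U\in\mathcal{L}(\mathcal{C}_{1}(\Omega,X),Y)$. This lifting is special to $p=1$; for general $p$ it can fail, which is precisely why Theorem \ref{t:Din-Sing and MOP2-1} must impose the extra summability condition (ii). Thus the hard part does not reside in the present argument but is already packaged in that cited result, after which both implications reduce to the $U$--$U^{\#}$ representing-measure correspondence.
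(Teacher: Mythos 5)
Your proof is correct and follows exactly the paper's own argument: the ``only if'' direction via $S:=U^{\#}$ and Proposition \ref{p:mUismUassoc}, and the ``if'' direction via \cite[Corollary 2.5]{MOP2} followed by Theorem \ref{t:mUassocismU}. Your added observation that the bounded $\infty$-semivariation requirement is automatic (by Example \ref{ex:q-semivar}) is a correct clarification, though it is already subsumed in Theorem \ref{t:mUassocismU}.
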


\begin{proof}
The necessary condition is clear by taking $S=U^{\#}$ and applying Proposition \ref{p:mUismUassoc}. By \cite[Corollary 2.5]{MOP2}, for a given operator $S\in\mathcal{L}(\mathcal{C}(\Omega), \mathcal{L}(X,Y))$, there exists an operator $U\in\mathcal{L}(\mathcal{C}_{1}(\Omega,X),Y)$ such that $S=U^{\#}$. From Theorem \ref{t:mUassocismU}, the sufficient condition is clear.
\end{proof}

The following results, which are similar to Theorem \ref{t:Din-Sing and MOP2-2}, can be obtained using \cite[Corollaries 2.6 and 2.7]{MOP2} (instead of \cite[Corollary 2.5]{MOP2}).

\begin{thm} \label{t:Din-Sing and MOP2-3}
Let $X$ and $Y$ be Banach spaces such that $X^{*}$ is of cotype 2. Let $\Omega$ be a compact Hausdorff space. Assume that $m:\Sigma\rightarrow \mathcal{L}(X,Y^{**})$ is a bounded vector measure. Then, for every $p\leq 2$, there exists an operator $U\in\mathcal{L}(\mathcal{C}_{p}(\Omega,X),Y)$ such that $m$ is its representing measure if and only if there exists an operator $S\in\mathcal{L}(\mathcal{C}(\Omega), \mathcal{L}(X,Y))$ such that $m$ is its representing measure.
\end{thm}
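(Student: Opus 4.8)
The plan is to carry over, essentially verbatim, the proof of Theorem \ref{t:Din-Sing and MOP2-2}, changing only the cited input from \cite{MOP2}: where that theorem used \cite[Corollary 2.5]{MOP2} (which lifts operators for $p=1$ without restrictions), here I would invoke \cite[Corollaries 2.6 and 2.7]{MOP2}, which produce the lifting precisely under the hypotheses that $X^{*}$ is of cotype 2 and $p\leq 2$. The whole argument is an exercise in the bookkeeping between $U$ and its associated operator $U^{\#}$ developed throughout Section \ref{s4}, together with this one structural result about the range $p\leq 2$.

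For the necessity (the direction ``$U$ exists $\Rightarrow$ $S$ exists''), I would simply set $S=U^{\#}$. Indeed, if $m:\Sigma\rightarrow\mathcal{L}(X,Y^{**})$ is the representing measure of some $U\in\mathcal{L}(\mathcal{C}_{p}(\Omega,X),Y)$, then Proposition \ref{p:mUismUassoc} gives at once that $m$ is the representing measure of the associated operator $U^{\#}\in\mathcal{L}(\mathcal{C}(\Omega),\mathcal{L}(X,Y))$, so the required $S$ exists. This half is insensitive both to the value of $p$ and to any cotype assumption.

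For the sufficiency (the direction ``$S$ exists $\Rightarrow$ $U$ exists''), I would start from an operator $S\in\mathcal{L}(\mathcal{C}(\Omega),\mathcal{L}(X,Y))$ whose representing measure is $m$, and then apply \cite[Corollaries 2.6 and 2.7]{MOP2}: since $X^{*}$ is of cotype 2 and $p\leq 2$, there exists $U\in\mathcal{L}(\mathcal{C}_{p}(\Omega,X),Y)$ with $U^{\#}=S$. Thus $m$ is the representing measure of $U^{\#}$, and Theorem \ref{t:mUassocismU} immediately yields that $m$ is a representing measure of $U$ as well. The only real content of the proof lies in this cited lifting result, where the combination of cotype 2 for $X^{*}$ with $p\leq 2$ is exactly what is needed to extend an operator on $\mathcal{C}(\Omega)$ valued in $\mathcal{L}(X,Y)$ to an operator on the strictly smaller space $\mathcal{C}_{p}(\Omega,X)$; hence I expect no genuine obstacle beyond correctly matching the hypotheses of Corollaries 2.6 and 2.7 to the stated range $p\leq 2$.
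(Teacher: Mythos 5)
Your proposal is correct and is essentially identical to the paper's own argument: the paper proves this theorem by repeating the proof of Theorem \ref{t:Din-Sing and MOP2-2} verbatim (necessity via $S=U^{\#}$ and Proposition \ref{p:mUismUassoc}; sufficiency via the lifting $U^{\#}=S$ followed by Theorem \ref{t:mUassocismU}), with \cite[Corollary 2.5]{MOP2} replaced by \cite[Corollaries 2.6 and 2.7]{MOP2}, exactly as you do. The only cosmetic remark is that for the cotype~2 case one of those two corollaries of \cite{MOP2} suffices (the other serving the cotype~$q$ analogue, Theorem \ref{t:Din-Sing and MOP2-4}), but this does not affect the argument.
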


\begin{thm} \label{t:Din-Sing and MOP2-4}
Let $X$ and $Y$ be Banach spaces such that $X^{*}$ is of cotype $q$, where $2\leq q <\infty$. Let $\Omega$ be a compact Hausdorff space. Assume that $m:\Sigma\rightarrow \mathcal{L}(X,Y^{**})$ is a bounded vector measure. Then, for every $p\leq q'$, there exists an operator $U\in\mathcal{L}(\mathcal{C}_{p}(\Omega,X),Y)$ such that $m$ is its representing measure if and only if there exists an operator $S\in\mathcal{L}(\mathcal{C}(\Omega), \mathcal{L}(X,Y))$ such that $m$ is its representing measure.
\end{thm}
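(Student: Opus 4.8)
The plan is to run the two-step argument of Theorem \ref{t:Din-Sing and MOP2-2} essentially unchanged, the only modification being that the cotype-$q$ factorization result \cite[Corollary 2.7]{MOP2} replaces \cite[Corollary 2.5]{MOP2}. All the structural work has already been done: passing between a representing measure of $U$ and a representing measure of its associated operator $U^{\#}$ is handled by Proposition \ref{p:mUismUassoc} and Theorem \ref{t:mUassocismU}, and the cotype hypothesis on $X^{*}$ enters only through the cited corollary.

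For the necessity (``only if'') direction I would not use any hypothesis on $X^{*}$: given $U\in\mathcal{L}(\mathcal{C}_{p}(\Omega,X),Y)$ whose representing measure is $m$, Proposition \ref{p:mUismUassoc} shows at once that $m$ is the representing measure of the associated operator $U^{\#}\in\mathcal{L}(\mathcal{C}(\Omega),\mathcal{L}(X,Y))$. Hence the required operator exists, namely $S=U^{\#}$.

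For the sufficiency (``if'') direction I would start from an operator $S\in\mathcal{L}(\mathcal{C}(\Omega),\mathcal{L}(X,Y))$ whose representing measure is $m$, and invoke \cite[Corollary 2.7]{MOP2}: since $X^{*}$ is of cotype $q$ with $2\leq q<\infty$, for every $p\leq q'$ there exists an operator $U\in\mathcal{L}(\mathcal{C}_{p}(\Omega,X),Y)$ with $U^{\#}=S$. Theorem \ref{t:mUassocismU} applied to this $U$ then gives that the representing measure of $U^{\#}=S$, which is $m$, is also a representing measure of $U$, as desired.

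I expect no genuine obstacle within this section, since once the factorization $S=U^{\#}$ is granted both implications collapse to the already-established Proposition \ref{p:mUismUassoc} and Theorem \ref{t:mUassocismU}. The one point deserving attention is the admissible range of exponents: the lift of $S$ to an operator on the smaller space $\mathcal{C}_{p}(\Omega,X)$ is available precisely for $p\leq q'$, exactly the constraint imposed by \cite[Corollary 2.7]{MOP2}, and this is where the cotype-$q$ strength of $X^{*}$ is consumed.
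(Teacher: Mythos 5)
Your proposal is correct and follows exactly the route the paper intends: the necessity direction is Proposition \ref{p:mUismUassoc} applied to $S=U^{\#}$, and the sufficiency direction combines the lifting result \cite[Corollary 2.7]{MOP2} (which is where the cotype-$q$ hypothesis on $X^{*}$ and the restriction $p\leq q'$ enter) with Theorem \ref{t:mUassocismU}, precisely mirroring the proof of Theorem \ref{t:Din-Sing and MOP2-2} with \cite[Corollary 2.7]{MOP2} in place of \cite[Corollary 2.5]{MOP2}.
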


\section*{Acknowledgements}

The research of Eve Oja was partially supported by institutional research funding IUT20-57 of the Estonian Ministry of Education and Research. The research of C\'{a}ndido Pi\~{n}eiro and Fernando Mu\~{n}oz was partially supported by the Junta de Andaluc\'{\i}a P.A.I. FQM-276.



\end{document}